\documentclass[12pt,epsfig,amsfonts]{amsart} 
\setcounter{tocdepth}{2}
\usepackage{amsmath,amsthm,amssymb,amscd,epsfig,color}
\usepackage{graphicx}
\usepackage{mathrsfs}
\usepackage{lineno}
\usepackage{fancybox}
\usepackage{overpic}
%\linenumbers
\usepackage{ulem}

%https://ja.overleaf.com/project/5e2ad87a674f5800018144ab

\setlength{\topmargin}{0.13in} \setlength{\textheight}{8.85in}
\setlength{\textwidth}{5.8in} \setlength{\oddsidemargin}{0.2in}
\setlength{\evensidemargin}{0.2in} 
%\SetWatermarkText{DRAFT}
%\SetWatermarkLightness{0.85}
%\SetWatermarkScale{1.2}

\setlength{\unitlength}{1cm}

\if0
\documentclass[12pt,reqno]{amsart}
\usepackage[notref, notcite]{showkeys}
\usepackage{amscd} 
\usepackage[hyphens]{xurl}
\setlength{\topmargin}{0.13in} \setlength{\textheight}{8.85in}
\setlength{\textwidth}{5.8in} \setlength{\oddsidemargin}{0.2in}
\setlength{\evensidemargin}{0.2in} 
\usepackage{mathrsfs}
\usepackage{amsthm}
\usepackage{amsmath}
\usepackage{amssymb}
\usepackage{amsfonts}
\usepackage{pb-diagram} 
\usepackage[all]{xy}
\usepackage[dvipdfmx]{graphicx}
\usepackage{latexsym}
\usepackage{comment}
\usepackage{otf}
\usepackage{ascmac}
\usepackage{caption}
\usepackage{mathtools}
\usepackage%[pagewise]
{lineno}
\usepackage{color}
\captionsetup[figure]{format=plain,labelformat=simple,labelsep=period,font=footnotesize}
\def\qed{\hfill $\Box$}
\fi

\newcommand{\num}{3}

\numberwithin{equation}{section}
\newcommand{\confrac}[2]{%
  \frac{\displaystyle{%
    \strut\hfill{#1}\hfill\;\vrule}}%
      {\displaystyle{%
       \strut\vrule\;\hfill{#2}\hfill}}}%

\makeatletter
% Operator-like output

%Operation-like output
    \newcommand\contFrac{\@ifstar{\@contFracStar}{\@contFracNoStar}}

   \def\singleContFrac#1#2{%
        \begin{array}{@{}c@{}}%
            \multicolumn{1}{c|}{#1}%
            \\%
            \hline%
           \multicolumn{1}{|c}{#2}%
        \end{array}%
   }

% No star version 
    \def\@contFracNoStar#1{%
% //\@nil is usefull if only one argument is given.
        \mathchoice{% * Display style
            \@contFracNoStarDisplay@#1//\@nil%
        }{%           * Text style
            \@contFracNoStarInline@#1//\@nil%
        }{%           * Script style
            \@contFracNoStarInline@#1//\@nil%
        }{%           * Script script style
            \@contFracNoStarInline@#1//\@nil%
        }%
    }

% No star version - Display style
    \def\@contFracNoStarDisplay@#1//#2\@nil{%
        \@ifmtarg{#2}{%
            #1%
        }{%
            #1+\cfrac{1}{\@contFracNoStarDisplay@#2\@nil}%
        }%
    }

% No star version - Inline style
        \def\@contFracNoStarInline@#1//#2\@nil{%
            \@ifmtarg{#2}{%
                #1%
            }{%
                #1 \@@contFracNoStarInline@@#2\@nil%
            }%
        }
        \def\@@contFracNoStarInline@@#1//#2\@nil{%
            \@ifmtarg{#2}{%
                + \singleContFrac{1}{#1}%
            }{%
                + \singleContFrac{1}{#1} \@@contFracNoStarInline@@#2\@nil%
            }%
        }

% Star version 
    \def\@contFracStar#1{%
        \mathchoice{% * Display style
% ////\@nil is usefull if only one argument is given.
            \@contFracStarDisplay@#1////\@nil%
        }{%           * Text style
% //\@nil is usefull if only one argument is given.
            \@contFracStarInline@#1//\@nil%
        }{%           * Script style
            \@contFracStarInline@#1//\@nil%
        }{%           * Script script style
            \@contFracStarInline@#1//\@nil%
        }%
    }

% Star version - Display style
    \def\@contFracStarDisplay@#1//#2//#3\@nil{%
        \@ifmtarg{#2}{%
            #1%
        }{%
            #1 + \cfrac{#2}{\@contFracStarDisplay@#3\@nil}%
        }%
    }

% Star version - Inline style
        \def\@contFracStarInline@#1//#2\@nil{%
            \@ifmtarg{#2}{%
                #1%
            }{%
                #1 \@@contFracStarInline@@#2\@nil%
            }%
        }
        \def\@@contFracStarInline@@#1//#2//#3\@nil{%
            \@ifmtarg{#3}{%
                + \singleContFrac{#1}{#2}%
            }{%
                + \singleContFrac{#1}{#2} \@@contFracStarInline@@#3\@nil%
            }%
        }
%\makeatother

\theoremstyle{plain}

\newtheorem{thm}{Theorem}[section]
\newtheorem{lem}[thm]{Lemma}

\newtheorem{pro}[thm]{Proposition}
\theoremstyle{definition}

\newtheorem*{prf*}{Proof}

\newtheorem*{pf*}{}
\newtheorem*{lem*}{LemmaA}
\newtheorem*{lm*}{LemmaB}
\newtheorem*{stra*}{Strategy for the proof of main result A}
\def\vph{\phi}
\makeatletter
\@namedef{subjclassname@2020}{%
  \textup{2020} Mathematics Subject Classification}
\makeatother
\title[How many points contain infinitely many homothetic copies]
%{Homothetic images in the digits of conformal IFS}
{How many points contain homothetic copies\\ in their Hurwitz continued fraction expansion?}

\author{Yuto Nakajima and Hiroki Takahasi}

%\usepackage{lineno}
%\linenumbers
\date{}

\address{Faculty of Science and Engineering, Doshisha University, Kyoto, 610-0394, JAPAN}
\email{yunakaji@mail.doshisha.ac.jp}
\address{Keio Institute of Pure and Applied Sciences (KiPAS), Department of Mathematics,
Keio University, Yokohama,
223-8522, JAPAN} 
\email{hiroki@math.keio.ac.jp}
%\date{February 2024}

\subjclass[2020]{05D10, 11A55, 11K50}
\thanks{{\it Keywords}: 
continued fraction; Hausdorff dimension; iterated function system} %multidimensional extension of
%Szemer\'edi’s theorem: 
%Furstenberg-Katznelson's theorem
%
%
%
\begin{document}

\begin{abstract} 
We prove that the set of complex irrationals %in the unit square
 whose partial quotients in their Hurwitz continued fraction expansion are naturally regarded as subsets of $\mathbb Z^2$ and contain infinitely many homothetic copies of any finite subset of $\mathbb Z^2$
is of Hausdorff dimension $1$. 
Our result provides a clear and concrete example
of multidimensional pattern emergence in number-theoretic expansions.
%This is a multidimensional analogue of 
%Tong and Wang's theorem [Acta Arith. {\bf 139}  (2009), 369--376] on the regular continued fraction. %on the regular continued fraction. 
%A proof of our main result does not rely on any multidimensional extension of Szemer\'edi's theorem.
%and is self-contained.
\end{abstract}

\maketitle

%$t=3$ fix, $E_t\to S$, $\widetilde E_t\to \widetilde S$
\section{Introduction}
The existence of arithmetic progressions in sets of integers is a profound problem in mathematics.
Szemer\'edi \cite{S75} confirmed a long standing conjecture of Erd\H{o}s and Tur\'an by showing that any subset of $\mathbb N$ with positive upper density contains arithmetic progressions of arbitrary lengths. Furstenberg and Katznelson \cite{FK78} established a multidimensional extension of Szemer\'edi's theorem, replacing arithmetic progressions by homothetic copies of finite subsets of $\mathbb Z^d$, $d\in\mathbb N$.

A natural question is whether these pattern structures 
exist in number-theoretic expansions. %various mathematical setup. 
 For the regular continued fraction expansion, Tong and Wang \cite{TW}  
 proved that 
 the set of irrationals in $(0,1)$ whose %sequences of 
partial quotients are strictly increasing and %(as sets) 
contain arithmetic progressions of arbitrary lengths and common differences
is of Hausdorff dimension $1/2$. 
Cao and Zhang \cite{ZC} generalized part of the results in \cite{TW} 
  to infinite iterated function systems on compact intervals.
%The proof in \cite{TW}  is constructive and does not rely on Szemer\'edi's theorem. 

In this paper we focus on
the Hurwitz complex continued fraction. It represents the minimal natural context in which multidimensional combinatorial and fractal phenomena we study appear in an %fully 
explicit and tractable form.
%For all complex irrationals in a set of Hausdorff dimension $1$
%whose Hurwitz partial quotients are naturally regarded as subsets of $\mathbb Z^2$, we %are able to 
%constructively demonstrate the existence of homothetic copies of any finite subset of $\mathbb Z^2$ in 
%sets of their partial quotients.
%This restriction allows us to present the phenomenon in a concise and transparent manner, avoiding the technical complications.
Under the natural identification of $\mathbb C$ with $\mathbb R^2$, the Gaussian field $\mathbb Q(\sqrt{-1})$ and the ring $\mathbb Z(\sqrt{-1})$ of Gaussian integers are identified  with $\mathbb Q^2$ and $\mathbb Z^2$ respectively.
Set \[U=\left\{x\in \mathbb C\colon %z=x+\sqrt{-1}y,\ 
-\frac{1}{2}\le {\rm Re}(x)<\frac{1}{2},\ -\frac{1}{2}\leq {\rm Im}(x)< \frac{1}{2}\right\}.\]
\if0For $x\in\mathbb R$, let $\lfloor x\rfloor$ denote the largest integer not exceeding $x$. For $x,y\in\mathbb R$ and $z=x+\sqrt{-1}y\in \mathbb C$ we define \[\lfloor z\rfloor=\left\lfloor x+\frac{1}{2}\right\rfloor+\sqrt{-1} \left\lfloor y+\frac{1}{2}\right\rfloor,\] which is one of the at most four Gaussian integers that are nearest to $z$ in the Euclidean metric. Define the {\it Hurwitz map} $H\colon U\setminus\{0\}\rightarrow U$ by \begin{equation}\label{H-map}H(z)=\frac{1}{z}-\left\lfloor \frac{1}{z}\right\rfloor.\end{equation} For each complex number $z\in U\setminus\{0\}$, define a sequence $c_1(z)$, $c_2(z),\ldots$ of nonzero Gaussian integers inductively by \[c_{n}(z)=\left\lfloor \frac{1}{H^{n-1}(z)}\right\rfloor.\] If $H^{n}(z)=0$ for some $n\geq1$ then $c_{n+1}(z)$ is not defined. 
\fi
Any $x\in U\setminus\mathbb Q^2$
   has the unique infinite expansion of the form 
\[x=\confrac{1 }{c_{1}(x)} + \confrac{1 }{c_{2}(x)}+ \confrac{1 }{c_{3}(x)}  +\cdots,\]
where $c_n(x)\in\mathbb Z(\sqrt{-1})\setminus\{\pm1,\pm\sqrt{-1}\}$  
for $n\geq1$, 
see \cite[Theorem~6.1]{DN} for example.
This expansion is 
called the
 {\it Hurwitz continued fraction expansion} \cite{H87}. 
%Each $c_n(z)\in\in\mathbb Z(\sqrt{-1})\setminus\{\pm1,\pm\sqrt{-1}\}$, 
%$n\in\mathbb N$ is called a partial quotient or simply a {\it digit} of $z$. 
%To analyze the structure of the set of digits as an infinite subset of $\mathbb Z^2$, 
 We restrict ourselves to the set 
\[E=\{x\in U\setminus\mathbb Q^2\colon c_{m}(x)\neq c_{n }(x)  \ \text{\rm for all } m,n\in\mathbb N\text{ with }  m\neq n\}.\]
Each $x\in E$  induces an injection 
$n\in \mathbb N\mapsto c_n(x)\in \mathbb Z^2$, and so
$\{c_n(x)\colon n\in\mathbb N\}$ is naturally regarded as an infinite subset of $\mathbb{Z}^2$.
%Our result is stated as follows. 
Let $A\subset\mathbb Z^2$. A subset of $\mathbb Z^2$ of the form $v+nA$, $v\in\mathbb Z^2$, $n\in\mathbb N$ %\textcolor{red}{$n$ negative not allowed?} 
is called {\it a homothetic copy of $A$}.
 %Furstenberg and Katznelson \cite{FK78}  proved that any subset of $\mathbb Z^2$ with positive upper Banach density contains a homothetic copy of any finite subset of $\mathbb Z^2$.
% Our main result is a multidimensional analogue of 
%Tong and Wang's theorem \cite{TW}.
Let %$H$ denote the set of $z\in E_{\rm Hu}$ such that $\{c_n(z)\colon z\in\mathbb N\}$ contains infinitely many homothetic copies of any finite subset of $\mathbb Z^2$. 
\[H=
\left\{\begin{tabular}{l}
\!\!\!$x\in E\colon \{c_n(x)\colon n\in\mathbb N\}$ contains infinitely many \!\!\!\\
\!\!\!\! homothetic copies of any finite subset of $\mathbb Z^2$ \!\!\!\!\!\!\!\!\!\end{tabular}
\right\}.\]
Let $\dim_{\rm H}$ denote the Hausdorff dimension on the Euclidean space $\mathbb R^2$.
\begin{thm}\label{thmHI}
$\dim_{\rm H}E=\dim_{\rm H}H=1.$
\end{thm}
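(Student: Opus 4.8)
The plan is to establish the two inequalities $\dim_{\rm H}E\le1$ and $\dim_{\rm H}H\ge1$ separately; since $H\subseteq E$ these together give $\dim_{\rm H}H=\dim_{\rm H}E=1$.

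For the upper bound the key point is that $E\subseteq\{x\in U\setminus\mathbb Q^2\colon|c_n(x)|\to\infty\}$: a disc of any fixed radius contains only finitely many Gaussian integers, so if $|c_n(x)|\not\to\infty$ then some value is repeated among the $c_n(x)$ and $x\notin E$. Fix $N\in\mathbb N$ and put $J_{\ge N}=\{x\colon|c_n(x)|\ge N\text{ for all }n\}$. Because every digit of a point of $J_{\ge N}$ has modulus $\ge N$, the branches $z\mapsto1/(c+z)$ composing a cylinder have distortion bounded by a constant $C(N)$ with $C(N)\to1$, so $\operatorname{diam}I(c_1,\dots,c_n)\le C(N)^{\,n}\prod_{k\le n}|c_k|^{-2}$, and covering $J_{\ge N}$ by generation-$n$ cylinders gives $\mathcal H^s(J_{\ge N})\le\bigl(C(N)^s\sum_{|c|\ge N}|c|^{-2s}\bigr)^{n}$ for every $n$. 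For fixed $s>1$ one has $\sum_{|c|\ge N}|c|^{-2s}\to0$ as $N\to\infty$ (compare with $\int_{|z|\ge N}|z|^{-2s}\,dz\asymp N^{2-2s}$), so for $N$ large the parenthesis is $<1$ and $\dim_{\rm H}J_{\ge N}\le s$; hence $\dim_{\rm H}J_{\ge N}\to1$. Since $\{|c_n(x)|\to\infty\}\subseteq\bigcup_{M\ge1}\{x\colon|c_n(x)|\ge N\ \forall n\ge M\}$ for every $N$, and each set in this countable union is a countable union of bi-Lipschitz images of $J_{\ge N}$ (one for each choice of the first $M-1$ partial quotients), countable stability of Hausdorff dimension gives $\dim_{\rm H}\{|c_n(x)|\to\infty\}\le\dim_{\rm H}J_{\ge N}$ for all $N$, hence $\le1$, and therefore $\dim_{\rm H}E\le1$.

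For the lower bound I would, for each $\varepsilon>0$, construct a Cantor set $\mathcal C\subseteq H$ with $\dim_{\rm H}\mathcal C\ge1-\varepsilon$. Fix an enumeration $(A_j)_{j\ge1}$ of the finite subsets of $\mathbb Z^2$ in which every finite subset occurs infinitely often, and let $\mathcal C$ consist of the points whose Hurwitz digit string has the form $F_1G_1F_2G_2\cdots$, where $F_j$ is a \emph{free block} of length $L_j$ whose digits run independently over the distinct elements of the annulus $\mathcal A_j=\{c\in\mathbb Z^2\colon N_j\le|c|<2N_j\}$, and $G_j$ is a short \emph{insertion block} that writes down, in a fixed order and with no freedom, the Gaussian integers $v_j+a$ for $a\in A_j$ (the translate $v_j+A_j$, a homothetic copy of $A_j$). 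When $N_j$ is large, for every $c\in\mathcal A_j$ the set $\{1/(c+z)\colon z\in U\}$ lies inside $U$, so $I(c)$ equals this set and $c$ may be followed by any digit; hence every such string is admissible (the entries of $G_j$, of large modulus, likewise concatenate freely). The parameters are chosen inductively: at stage $j$ pick $N_j$ so large that $\mathcal A_j$ avoids all previously used digits, that $\log N_j\ge100/\varepsilon$, and that the estimate below holds; then pick $L_j\le|\mathcal A_j|/2$ large; then pick $v_j$ of least modulus for which $v_j+A_j$ is disjoint from everything used so far and consists of digits of large modulus. The distinct choices inside each free block, disjointness of the annuli, and freshness of the insertion digits make the partial quotients of each $x\in\mathcal C$ pairwise distinct, so $\mathcal C\subseteq E$; and since each finite set equals $A_j$ for infinitely many $j$ with the $v_j$ distinct, each $x\in\mathcal C$ has infinitely many homothetic copies of every finite subset of $\mathbb Z^2$ among its partial quotients, so $\mathcal C\subseteq H$.

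It then remains to bound $\dim_{\rm H}\mathcal C$ from below via the mass distribution principle applied to the measure $\mu$ on $\mathcal C$ that splits mass uniformly among admissible continuations inside each free block and passes deterministically through each insertion block. Writing $D_j$ for a bound on the moduli of the digits of $G_j$, a cylinder $I$ ending at a block boundary satisfies $\mu(I)\le\prod_{i\le j}(2/|\mathcal A_i|)^{L_i}$ and, by bounded distortion, $\operatorname{diam}I\ge c_1\prod_{i\le j}(2N_i)^{-2L_i}\prod_{i\le j}D_i^{-2|A_i|}$; since $\log N_j\ge100/\varepsilon$ and, by choosing $L_j$ and $N_j$ large enough at stage $j$, the ``wasted'' contraction $\sum_{i\le j}|A_i|\log D_i$ is dominated by $\varepsilon\sum_{i\le j}L_i\log N_i$, one obtains $\mu(I)\le\operatorname{diam}(I)^{\,1-\varepsilon}$ (and similarly for cylinders inside a free block). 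The standard comparison between the $\mu$-measure of a small ball and of cylinders for a conformal system of bounded distortion — where one must use that $\mathcal C$ does not branch inside the insertion blocks, so that at scales between the diameters of the first and last cylinders of such a block the ball $B(x,r)$ is trapped in a single non-branching cylinder and inherits its estimate — then yields $\mu(B(x,r))\le C_\varepsilon r^{1-\varepsilon}$, whence $\dim_{\rm H}\mathcal C\ge1-\varepsilon$; letting $\varepsilon\to0$ finishes the proof. The main obstacle is exactly this lower bound: fitting into one Cantor set both a nearly one-dimensional amount of freedom and, at a sparse sequence of scales, the deterministic ``pinches'' that realize the prescribed homothetic copies, the whole construction hinging on calibrating the block lengths $L_j$ against the digit sizes $N_j$ and $D_j$ so that these deterministic insertion blocks spoil neither the dimension nor the ball/cylinder comparison; the inputs about admissibility of large-modulus Hurwitz digits and bounded distortion of Hurwitz cylinders are standard.
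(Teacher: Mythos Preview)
Your argument is correct in outline and shares the paper's two-step skeleton (upper bound via $|c_n|\to\infty$, lower bound by inserting prescribed patterns into a high-dimensional Cantor set), but the implementations differ in a real way.

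For the upper bound you essentially reprove Good's theorem for Hurwitz continued fractions; the paper simply invokes \cite[Theorem~1.3]{Ger}. For the lower bound the paper takes a two-stage route: it first builds a \emph{seed set} $S$ (digits growing geometrically, $\|c_n\|_\infty\in[t^n,2t^n)$) and proves $\dim_{\rm H}S\ge1$ by a mass-distribution argument; then it inserts growing integer squares $W_k$ at positions $n_k$ to obtain $\widetilde S\subset H$, and transfers the dimension by showing that the digit-deletion map $f\colon\widetilde S\to S$ is H\"older with exponent $(1+5\varepsilon)^{-1}$. You instead perform the whole construction at once: free blocks in fixed annuli interleaved with deterministic insertion blocks realising an enumeration of finite sets, and you run the mass-distribution principle directly on the modified set, absorbing the insertion contraction into the $\varepsilon$-slack. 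Your approach avoids the H\"older lemma and is arguably more elementary; the paper's approach cleanly separates ``large dimension'' from ``pattern insertion'' and makes the role of the insertions transparent via a single inequality (Lemma~\ref{lemesti1}). A minor further difference: the paper inserts arbitrarily large integer squares (which eventually contain homothetic copies of \emph{every} finite set), whereas you insert one chosen copy per block along a repeating enumeration; both devices yield $\mathcal C\subset H$.
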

%\begin{thm}\label{thmHI}
%The set $H$ is of Hausdorff dimension $1$.
%The set of $z\in E_{\rm Hu}$ such that $\{c_n(z)\colon z\in\mathbb N\}$ contains infinitely many homothetic copies of any finite subset of $\mathbb Z^2$ 
%is of Hausdorff dimension $1$.
%The set \[H=\{z\in E\colon  \{c_n(z)\}\ \text{contains a homothetic image of any finite subset of $\mathbb Z^2$}\}\] is of Hausdorff dimension $1.$
%\end{thm}

%A proof of Theorem~\ref{thmHI} is constructive and does not rely on any multidimensional extension of Szemer\'edi's theorem. 

The restriction to the Hurwitz continued fraction allows us to construct explicit sequences of partial quotients exhibiting the desired patterns.
Using an iterated function system  extracted from the Hurwitz continued fraction, we first 
 construct {\it a seed set} in $E$ of Hausdorff dimension close to $1$ with a controlled structure. By carefully inserting arbitrarily large integer squares, we modify the seed set to ensure that it lies in $H$ and keeps Hausdorff dimension close to $1$. This verifies
the lower bound $\dim_{\rm H}H\geq 1$.
To our knowledge, this type of construction first appeared in \cite{FW01} in the context of symbolic dynamics.
The upper bound $\dim_{\rm H}E\leq 1$ follows from
%the result of Gonz\'alez Robert 
\cite[Theorem~1.3]{Ger}.

%and contains infinitely many homothetic copies of any finite subset of $\mathbb Z^2$.
%For the sake of the brevity of this paper we have restricted ourselves to the Hurwitz continued fraction. 
%The proof of Theorem~\ref{thmHI} can be generalized to other complex continued fractions, such as those in \cite{KST76,@@@}.

%In order to show that this modification does not substantially lose Hausdorff dimension, we exploit the structure of $E_{\rm Hu}$. %An analogous modification works for the set $\{x\in (0,1)\setminus\mathbb Q\colon a_m(x)\neq a_{n}(x)\text{ for all }m,n\in\mathbb N,\ m\neq n\}$, but   not for the smaller set $\{x\in (0,1)\setminus\mathbb Q\colon a_n(x)<a_{n+1}(x)\text{ for all }n\in\mathbb N\}$ considered in \cite{TW}.

Our result provides a clear and concrete example
of multidimensional pattern emergence in number-theoretic expansions,
illustrating both the combinatorial richness and fractal complexity of the Hurwitz continued fraction.
Theorem~\ref{thmHI} shows that the set 
$H$ is maximal dimensional within the naturally constrained set $E$, which indicates the richness of the phenomenon. %It would be nice to provide a condition on homothetic copies that causes a drop of Hausdorff dimension from $1$.
The method presented here extends to other %number-theoretic 
expansions, %complex continued fractions,
highlighting the broader relevance of constructive approaches in the study of multidimensional pattern formation.

\section{Preliminaries}
 In this section we summarize basic properties of the Hurwitz continued fraction. % that are necessary for the proof of Theorem~\ref{thmHI}.

 \subsection{Extracting an infinite iterated function system}\label{basic-def}
 There is a canonical infinite iterated function system %$\{x\in[0,1]\mapsto 1/(x+i)\in[0,1]\colon i\in\mathbb N\}$ 
 that represents the regular continued fraction,  whereas this is not the case for the Hurwitz continued fraction.
However, neglecting finitely many cylinders one can extract an infinite iterated function system from the Hurwitz continued fraction that can be used for the proof of Theorem~\ref{thmHI}.
%there is no natural symbolic coding
%that relates the Hurwitz continued fraction to the countable full shift.
%the natural symbolic coding of the Hurwitz map $H$ does not yield a countable full shift. there are some forbidden words. 
Put 
\[\mathbb D_{1}=\{(k,\ell)\in\mathbb Z^2\colon k^2+\ell^2\geq 2\}\ \text{  
and }\
\mathbb D_{2}=\{(k,\ell)\in\mathbb Z^2\colon k^2+\ell^2\geq 8\}.\]
%For each $i=1,2$ and $n\in\mathbb N$,   let $\mathbb D_i^n$ denote the set of $n$-strings of elements of $\mathbb D_i$.
For each $i=(k,\ell)\in\mathbb D_1$, 
define a region
 \[U_{i}=
 \left\{x\in\mathbb C\colon -\frac{1}{2}\leq {\rm Re}(x^{-1})-k<\frac{1}{2},\ -\frac{1}{2}\leq {\rm Im}(x^{-1})-\ell<\frac{1}{2} \right\}.\]
 %and
 %is bordered by the four circles $(x-1/(2k\pm1))^2+y^2=(1/(2k\pm 1))^2$, $x^2+(y+ 1/(2\ell\pm1))^2=(1/(2\ell\pm1))^2$
 %(double sign corresponds) through the origin.
 %Clearly, if $(k_1,l_1)\neq (k_2,l_2)$ then $U_{k_1,l_1}\cap U_{k_2,l_2}=\emptyset$.
%\textcolor{red}{(remove) For $n\in\mathbb N$,
%$(c_1,\ldots, c_n)\in  \mathbb D_1^n$ define  an {\it $n$-cylinder}
%\[\textcolor{red}{[c_1,\ldots, c_n]?}I(c_1,\ldots, c_n)=\{x\in U\colon\ c_i(x)\text{ is well-defined and }c_i(x)=c_i\text{ for every }1\leq i\leq n\}.\]
%\textcolor{red}{we need def of $c_i(x)$ as in \cite{NT2}}
%This
%is the set of elements of $U$ which have the finite or infinite Hurwitz continued fraction expansion %\eqref{H-exp} 
%beginning with $c_1,\ldots,c_n$.)} 
It is easy to see that: %the following properties:

\begin{itemize}  
\item %\textcolor{red}{for each $1$-cylinder $I(c_1)$, $I(c_1)=U_{c_1}\cap U\subsetneq U_{c_1}$ if $c_1\in\mathbb D_1\setminus\mathbb D_2$ and $I(c_1)=U_{c_1}$ if $c_1\in\mathbb D_2$}
$U_{i}\cap \partial U\neq \emptyset$
if $i\in\mathbb D_1\setminus\mathbb D_2$ and
$\overline{U_{i}}\subset U$ if $i\in\mathbb D_2$;
%has the form
 % \[I(c_1)=\begin{cases}
 % U_{k,\ell}\cap U\subsetneq U_{k,\ell}&\text{ if $(k,\ell)\in\mathbb D_1\setminus \mathbb D_2$;}\\
 % U_{k,\ell}&\text{ if $(k,\ell)\in\mathbb D_2$;}
 % \end{cases}\]
\item  
%\textcolor{red}{$\bigcup\{I(c_1)\colon c_1\in\mathbb D_1\}=U\setminus\{0\}$, 
%see \textsc{Figure}~\ref{fig2}.}
$\bigcup\{U_{i}\cap U\colon i\in\mathbb D_1\}=U\setminus\{0\}$.
%\item if $n\geq2$ then $H([c_1,\ldots,c_n])\subset [c_2,\ldots,c_n]$ for every $(c_1,\ldots,c_n)\in\mathbb D_1^n$,  and the equality holds if  $(c_1,\ldots,c_n)\in\mathbb D_2^n$;
%\item $\overline{U_{k,\ell}}\subset U$ if and only if $(k,\ell)\in\mathbb D_2$.
\end{itemize}

\if0
\begin{figure}
\begin{center}
\includegraphics[height=7.5cm,width=8cm]{Hurwitz7.eps}
\caption
{The domain $U_{k,\ell}$ ($(k,\ell)\in\mathbb D_1$) is bordered by the four circles through the origin, orthogonally intersecting each other.\textcolor{red}{Is this figure necessary? same as the one in Indagationes.} \textcolor{cyan}{I do not need this}} 
\label{fig1}
\end{center}
\end{figure}\fi

 For each $i=(k,\ell)\in\mathbb D_2$ 
 define a map $\phi_{i}\colon  \overline{U}\to\mathbb C$ by
\[\phi_{i}(x)= \frac{1}{x+k+\sqrt{-1}\ell}.\]
%Set
%\[\Lambda=\left\{z\in U\setminus\mathbb Q^2\colon c_n(z)\in\mathbb D_2\text{ for all }n\geq1\right\}.\]
%The Hurwitz continued fraction expansion of any $z\in \Lambda$ is re-written as
%\[z=\lim_{n\to\infty}\phi_{c_1(z)}\circ\cdots\circ\phi_{c_n(z)}(0),\]
Note that $\phi|_{{\rm int}(U)}$ is univalent, $\phi_{i}(U)=U_{i}$ and $\phi_{i}(\overline{U})\subset \overline U$.
For $n\geq2$ and $(\omega_1,\ldots,\omega_n)\in \mathbb D_2^n$,
write $\phi_{\omega_1\cdots \omega_n}=\phi_{\omega_1}\circ\cdots\circ\phi_{\omega_n}.$
%Consider a sequence $\Phi^{(n)}:=\{\phi_{(a, b)}\}_{(a, b)\in I^{(n)}}, n\in \mathbb N$ of collections of maps  and its limit set $\Pi(I^{\infty}),$ where
Define a map $\Pi\colon \mathbb D_2^{\mathbb N} \to \overline{U}$ by
\[\Pi(\underline{\omega})=\lim_{n\to \infty}\phi_{\omega_1\cdots\omega_n}(0),\]
where $\underline{\omega}=(\omega_1,\omega_2,\ldots).$ The set
$\Lambda=\Pi(\mathbb D_2^{\mathbb N})$
is contained in $U\setminus\mathbb Q^2$. If $x=\Pi(\underline{\omega})$
then $\omega_n=c_n(x)$ for all $n\in\mathbb N$, see \cite{NT2} for details.
We list properties of the iterated function system $\{\phi_i\colon i\in\mathbb D_2\}$.
%The collection $\Phi=\{\phi_{k,l}\}_{(k,l)\in  \mathbb D_2 }$ is a $2$-decaying conformal IFS on $\overline{U}$ \cite[Lemma~4.2]{NT2}.
%We extend $\phi_{k,\ell}$ univalently to the outside of $\overline{U}$. For $r>0$ let  \[U(r)=\left\{ x+\sqrt{-1}y\in \mathbb C\colon -\frac{1}{2}-r< x<\frac{1}{2}+r,\ -\frac{1}{2}-r< y< \frac{1}{2}+r\right\}.\]
 %\begin{lem}[{\cite[Lemma~4.1]{NT2}}]\label{extend}If $0<r_0<1/2$, then for every $(k,\ell)\in\mathbb D_2$  the map
 %\[\widetilde\phi_{k,\ell}\colon z\in U(r_0)\mapsto \frac{1}{z+k+\sqrt{-1}\ell}\in\mathbb C\]
%is well-defined, univalent and satisfies
%$\max_{z\in\overline{U}}|D \widetilde\phi_{ k,\ell}(z)|\leq 2/9$. If $0<r\leq r_0$ then $\widetilde\phi_{k,\ell}(U(r))\subset U(r)$.  \end{lem}

\begin{figure}
\begin{center}
\includegraphics[height=10cm,width=10cm]{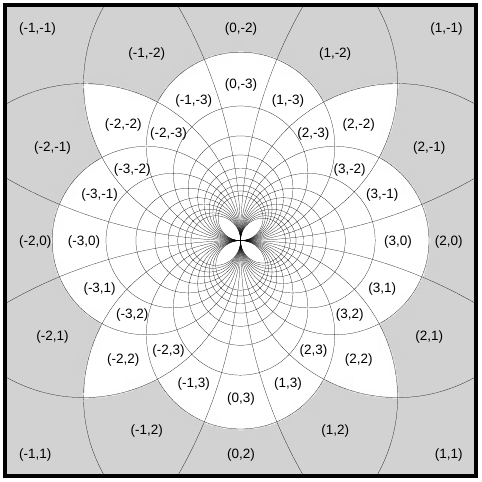}
\caption
{%The collection %$\{I(c_1)\colon c_1=(k,\ell)\in\mathbb D_1 \}$ 
%of $1$-cylinders tessellates $U$. 
The shaded region indicates $\bigcup\{U_{i}\cap U\colon i\in\mathbb D_1\setminus\mathbb D_2\}$.}\label{fig2}
\end{center}
\end{figure}

%Let $\Omega\subset \mathbb C$ be a region. For a holomorphic function $\phi\colon\Omega\to\mathbb R$ and $z\in\Omega$ let $D\phi(z)$ denote the complex derivative of $\phi$ at $z$. 
%Throughout this paper, let $\Delta\subset \mathbb C$ be a connected compact set such that the closure of its interior coincides with $\Delta$.  We assume $\Delta$ is a convex set. 
%Let $I$ be a countable subset of $\mathbb C$ and let $\{\phi_i\colon i\in I\}$ be a collection of maps from $\Delta$ to itself. 
%We say $\{\phi_i\colon i\in I\}$ is a {\it conformal IFS} if the following three conditions hold:
\begin{itemize}

\item[(I)] (open set condition) For all $i,j\in\mathbb D_2$ with $i\neq j$,  \[\phi_{i}({\rm int}\overline{U})\cap \phi_{j}({\rm int}\overline{U})=\emptyset.\]

%\item[(A2)] (conformality) There exists a connected open set $\widetilde \Delta\subset\mathbb C$ containing $\Delta$ such that each $\phi_i$ extends to a $C^{1}$ conformal diffeomorphism $\widetilde\phi_i\colon\widetilde \Delta\to\widetilde \phi_i(\widetilde \Delta)\subset\widetilde \Delta$. With a slight abuse of notation, we often write $\phi_i$ for $\widetilde\phi_i$.

\item[(II)] (conformality)
For each $i\in\mathbb D_2$, 
$\phi_i|_{U}$ is univalent,  $\phi_{i}(U)=U_{i}$, %$(\phi_{k,l}|_U)^{-1}=H|_{U_{k,l}}$,  
 $\phi_{i}(\overline{U})\subset \overline U$, and has a univalent extension to a connected open set containing $\overline{U}$.

\item[(III)](uniform contraction) %There exist  $m\in\mathbb N$ and $\gamma\in (0, 1)$ such that for any $(i_1,\ldots, i_m)\in I^{m}$ and any $z\in \Delta$ we have \[0<|D\phi_{i_1\cdots i_m}(z)|\le \gamma.\]
There exists $\rho\in (0, 1)$ such that for all $i\in \mathbb D_2$, %and any $z\in U$,
\[\sup_{x\in U}|D\phi_{i}(x)|\le \rho,\]
where $D\phi_i(x)$ denotes the complex derivative of $\phi_i$ at $x$.

\item[(IV)] (2-decay) There exist positive constants $C_1$, $C_2$ such that for all $i\in \mathbb D_2$ and  $x\in U$,  \begin{equation}\label{CF-der}\frac{C_1}{|i|^{2 }}\le |D\phi_i(x)|\le \frac{C_2 }{|i|^{2} }.\end{equation}
\item[(V)] (isolation) $\overline{\Lambda}\subset{\rm int}(\overline{U})$.
\end{itemize}
Items (I)--(IV) are easy to check by \cite[Lemma~4.2]{NT2}. Item (V) follows from the observation that $\bigcup\{U_{i}\colon i\in\mathbb D_1\setminus\mathbb D_2\}$ contains a neighborhood of $\partial U$ as in \textsc{Figure}~\ref{fig2}.

We will use two well-known lemmas on univalent functions.
The first one %is elementary in complex analysis, which 
follows from the Koebe's distortion theorem
(see e.g., \cite[Theorem~1.4]{CG93}). 
\begin{lem}[{\cite[Lemma~2.2]{NT2}}]\label{distortion-lem}Let $\Omega\subset\mathbb C$ be a region and let $A$ be a compact subset of $\Omega$.
  There exists a constant $K\geq1$ such that for every univalent function $\phi\colon\Omega\to\mathbb C$ and all $x_1$, $x_2\in A$ we have
\[\frac{|D\phi(x_1)|}{|D\phi(x_2)|}\leq K.\]\end{lem}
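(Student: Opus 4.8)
The plan is to deduce Lemma~\ref{distortion-lem} from the Koebe distortion theorem by one normalization estimate followed by a covering/chaining argument over the compact set $A$. First I would dispose of the case $\Omega=\mathbb C$: a univalent entire function is affine, $\phi(z)=\alpha z+\beta$ with $\alpha\neq 0$, so $|D\phi|$ is constant and $K=1$ suffices. Henceforth assume $\Omega\neq\mathbb C$, so that every nonempty compact subset of $\Omega$ has positive distance to $\partial\Omega$.

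The local ingredient is the following comparison, uniform in $\phi$. Suppose $\overline{D(a,s)}\subset\Omega$ and $\phi\colon\Omega\to\mathbb C$ is univalent (so $D\phi$ never vanishes). The function
\[f(w)=\frac{\phi(a+sw)-\phi(a)}{s\,D\phi(a)}\qquad(|w|<1)\]
is univalent on the unit disk with $f(0)=0$, $f'(0)=1$ and $f'(w)=D\phi(a+sw)/D\phi(a)$. The Koebe distortion theorem (\cite[Theorem~1.4]{CG93}) gives $\tfrac{1-r}{(1+r)^{3}}\le |f'(w)|\le \tfrac{1+r}{(1-r)^{3}}$ for $|w|\le r<1$; taking $r=\tfrac12$ and translating back, for all $z_1,z_2\in D(a,s/2)$ one obtains $|D\phi(z_1)|/|D\phi(z_2)|\le 81$, an absolute constant independent of $a$, $s$ and $\phi$.

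To globalize, I would first replace $A$ by a compact \emph{connected} set $A^{*}$ with $A\subset A^{*}\subset\Omega$. This is possible because $\Omega$, being a region, is open and connected, hence path connected: cover $A$ by finitely many closed disks lying in $\Omega$, let $W$ be their union, and join the finitely many components of $W$ by polygonal arcs inside $\Omega$; then $A^{*}=W\cup(\text{arcs})$ works. Connectedness is genuinely needed, since on a connected domain a univalent map may have widely varying derivative (think of a branch of an exponential), so no estimate can hold across distinct components of a disconnected $A$. Now set $d^{*}=\operatorname{dist}(A^{*},\partial\Omega)>0$ and cover $A^{*}$ by disks $D(a_1,d^{*}/4),\dots,D(a_N,d^{*}/4)$ with $a_j\in A^{*}$, where $N$ depends only on $A^{*}$ (hence on $A$ and $\Omega$), not on $\phi$. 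Since $A^{*}$ is connected, any two points $x_1,x_2\in A^{*}$ are joined by a chain of at most $N$ of these disks whose consecutive members overlap; each disk satisfies $\overline{D(a_j,d^{*}/2)}\subset\Omega$, so applying the local comparison on $D(a_j,d^{*}/2)\supset D(a_j,d^{*}/4)$ along the chain multiplies the ratio of $|D\phi|$ by at most $81$ at each of the at most $N$ steps. Hence $|D\phi(x_1)|/|D\phi(x_2)|\le 81^{N}$ for all $x_1,x_2\in A\subset A^{*}$, and $K:=81^{N}\ge 1$ depends only on $\Omega$ and $A$.

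The analytic heart of the argument is thus a single application of the Koebe distortion theorem after the standard normalization; the only mildly technical points are the construction of the connected compact set $A^{*}$ and the bookkeeping in the disk-chain, both routine and relying only on the connectedness of the region $\Omega$. I do not anticipate a real obstacle here; the one point that needs care is keeping $N$, and hence $K$, independent of the univalent map $\phi$, which the argument above ensures by construction.
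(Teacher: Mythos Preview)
Your argument is correct: the normalization to a schlicht function on the unit disk plus Koebe's distortion theorem gives the uniform local bound, and the chain-of-disks argument over a connected compact enlargement $A^{*}\supset A$ globalizes it with a constant depending only on $\Omega$ and $A$. The paper does not actually prove this lemma; it merely cites \cite[Lemma~2.2]{NT2} and remarks that it follows from Koebe's distortion theorem, which is exactly the route you take. One small expository quibble: your aside that ``no estimate can hold across distinct components of a disconnected $A$'' is misleading---the lemma does hold for disconnected $A$ (and your own proof establishes it via $A^{*}$); what you mean is that the chaining step requires passing through a connected set, which the connectedness of the region $\Omega$ supplies.
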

For $x\in\mathbb C$ and $\delta>0$ let
$B(x, \delta)=\{y\in\mathbb C\colon |x-y|< \delta\}$.
\begin{lem}[{\cite[Lemma~2.3]{NT2}}]\label{conf-lem}
Let $K\geq 1$,
let $\Omega\subset \mathbb C$ be a region and let $\phi\colon \Omega\to \mathbb C$ be univalent. % injective conformal map (univalent). 
For any $x\in \Omega$ and $\delta>0$ such that \[ B(x, \delta)\subset \Omega \ \text{ and }
\ \sup_{x_1,x_2\in  B(x, \delta)}\frac{|D\phi(x_1)|}{|D\phi(x_2)|}\leq K,\]
we have
\[ B\left(\phi(x), \frac{\delta|D\phi(x)|}{3K}\right)\subset \phi( B(x, \delta) ).\]\end{lem}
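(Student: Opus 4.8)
The plan is to prove the slightly stronger inclusion
\[B\!\left(\phi(x),\frac{\delta\,|D\phi(x)|}{K}\right)\subseteq \phi(B(x,\delta)),\]
which immediately yields the stated conclusion since $3K\ge K$ forces $B(\phi(x),\delta|D\phi(x)|/(3K))\subseteq B(\phi(x),\delta|D\phi(x)|/K)$. The whole argument rests on a length estimate for preimages of radial segments under the inverse map, so no further appeal to the Koebe theorems is needed beyond the distortion hypothesis itself. First I would record its pointwise consequence: taking $x_1=x$ and $x_2=\zeta$ in the supremum gives $|D\phi(\zeta)|\ge |D\phi(x)|/K$ for every $\zeta\in B(x,\delta)$.

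Since $\phi$ is univalent, $f:=\phi|_{B(x,\delta)}$ is a homeomorphism onto the open set $V:=\phi(B(x,\delta))$, with holomorphic inverse $\psi:=f^{-1}\colon V\to B(x,\delta)$ satisfying $|D\psi(w)|=|D\phi(\psi(w))|^{-1}\le K/|D\phi(x)|$ for all $w\in V$. Set $a=\phi(x)\in V$. I would argue by contradiction: suppose $w\in\mathbb C$ satisfies $|w-a|<\delta|D\phi(x)|/K$ but $w\notin V$. Travelling along the segment from $a$ to $w$, let $\tau^\ast=\inf\{\tau\in[0,1]\colon a+\tau(w-a)\notin V\}$; since $V$ is open and $a\in V$ we have $\tau^\ast>0$, the point $w^\ast=a+\tau^\ast(w-a)$ lies on $\partial V$, and $a+\tau(w-a)\in V$ for all $\tau\in[0,\tau^\ast)$.

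Now consider the preimage curve $\gamma(\tau)=\psi(a+\tau(w-a))$, $\tau\in[0,\tau^\ast)$, with $\gamma(0)=x$. The key geometric input is that $\gamma$ must reach the bounding sphere, i.e.\ $|\gamma(\tau)-x|\to\delta$ as $\tau\uparrow\tau^\ast$. This is where properness enters: if some sequence $\gamma(\tau_n)$ accumulated at an interior point $z_0\in B(x,\delta)$, then continuity of $\phi$ would give $w^\ast=\lim\phi(\gamma(\tau_n))=\phi(z_0)\in V$, contradicting $w^\ast\in\partial V$. Granting this, the length of $\gamma$ is at least $\limsup_{\tau\uparrow\tau^\ast}|\gamma(\tau)-x|=\delta$, while on the other hand
\[\operatorname{length}(\gamma)=\int_0^{\tau^\ast}|D\psi(a+\tau(w-a))|\,|w-a|\,d\tau\le \frac{K}{|D\phi(x)|}\,\tau^\ast|w-a|\le\frac{K}{|D\phi(x)|}\,|w-a|.\]
Combining the two bounds gives $|w-a|\ge \delta|D\phi(x)|/K$, contradicting the choice of $w$; hence every such $w$ lies in $V$, which proves the inclusion.

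The main obstacle, and the only point demanding care, is the boundary-correspondence step establishing $|\gamma(\tau)-x|\to\delta$: it uses that $\psi$ is a proper homeomorphism onto $V$ together with the continuity of $\phi$ on the open set $B(x,\delta)\subset\Omega$, and one must also note that the length integral is genuinely finite (which the upper bound above already provides) so that the comparison $\delta\le\operatorname{length}(\gamma)$ is meaningful. Everything else is a routine application of the fundamental theorem of calculus along the segment together with the pointwise distortion bound, and the extra factor $3$ in the statement is absorbed as slack.
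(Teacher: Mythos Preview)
The paper does not supply its own proof of this lemma; it is quoted verbatim from \cite[Lemma~2.3]{NT2} and used as a black box. So there is no argument in the paper against which to compare yours.

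Your proof is correct and in fact establishes the sharper inclusion with denominator $K$ rather than $3K$. The route you take---pulling back a radial segment via the inverse map, bounding its arclength above by the Lipschitz-type estimate $|D\psi|\le K/|D\phi(x)|$, and bounding it below by~$\delta$ via the properness/boundary-correspondence step---is a standard and clean argument for this type of ``inner radius'' estimate. The point you single out as the main obstacle, namely that $|\gamma(\tau)-x|\to\delta$ as $\tau\uparrow\tau^\ast$, is handled correctly: the negation yields a subsequence staying in a compact $\overline{B(x,\delta-\varepsilon)}$, hence accumulating at some $z_0\in B(x,\delta)$, whence $\phi(z_0)=w^\ast\in\partial V$ contradicts $\phi(z_0)\in V$. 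The length comparison then gives the contradiction $\delta\le (K/|D\phi(x)|)\,|w-a|$, and the factor $3$ in the stated lemma is indeed pure slack.
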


\subsection{Estimates of Euclidean  diameters of cylinders}\label{diam-sec}
For a set $A\subset\mathbb C$
let ${\rm diam}(A)=\sup\{|x_1-x_2|\colon x_1,x_2\in A\}.$ 
 For $n\in \mathbb N$ and $(c_1,\ldots, c_n)\in  \mathbb D_1^n$, define non-negative integers
 $p_n$ and  $q_n$ by the recursion formulas
\begin{equation}\label{pq}\begin{split}p_{-1}=1,\ p_0=0,\ p_j&=c_jp_{j-1}+p_{j-2} \ \text{ for  } 1\le j\le n,\\
q_{-1}=0,\ q_0=1,\ q_j&=c_jq_{j-1}+q_{j-2} \ \text{ for  } 1\le j\le n.\end{split}\end{equation}
Following \cite[Lemma~2.4]{Ger}, let $\xi\in U$ denote the complex irrational such that
$c_n(\xi)=3+4\sqrt{-1}$ for all $n\geq1$, and set $\gamma=2|\xi|/(|\xi|+1)^2.$
%The next lemma provides a key estimate on diameters of cylinders.

%\begin{lem}[{\cite[Lemma~2.4]{Ger}}]\label{proper1}For any $n\in \mathbb N,$ $(c_1,\ldots, c_n)\in  \mathbb D_2^n$ we have \textcolor{red}{$\gamma$ used}\[\frac{\gamma}{|q_n|^2}\le {\rm diam}([c_1,\ldots,c_n])\le\frac{2}{|q_n|^2}.\]\end{lem}
\begin{lem}\label{diam-lem}
For any $n\in \mathbb N,$ $(c_1,\ldots, c_n)\in  \mathbb D_2^n$
 we have
\[\gamma\prod_{j=1}^n\frac{1}{(|c_j|+1)^2}< {\rm diam}( \phi_{c_1,\ldots, c_n}(\overline{U}))<{2}\prod_{j=1}^n\frac{1}{(|c_j|-1)^2}.\]
\end{lem}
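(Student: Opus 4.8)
The plan is to use that the cylinder map $\phi_{c_1\cdots c_n}$ is a Möbius transformation---a composition of the Möbius maps $x\mapsto 1/(x+c_j)$---so that chord lengths in its image are governed \emph{exactly} by the derivative, and then to estimate that derivative pointwise on $\overline U$ by the chain rule together with the defining inequality of $\mathbb D_2$. Concretely, writing $\phi_{c_1\cdots c_n}(x)=(p_{n-1}x+p_n)/(q_{n-1}x+q_n)$ with $p_n,q_n$ as in \eqref{pq} and using the identity $p_{n-1}q_n-p_nq_{n-1}=(-1)^n$, one has $|D\phi_{c_1\cdots c_n}(x)|=|q_{n-1}x+q_n|^{-2}$ and, for all $a,b\in\overline U$,
\[|\phi_{c_1\cdots c_n}(a)-\phi_{c_1\cdots c_n}(b)|=|a-b|\sqrt{|D\phi_{c_1\cdots c_n}(a)|\,|D\phi_{c_1\cdots c_n}(b)|}.\]
These hold on $\overline U$ because $\phi_{c_1\cdots c_n}(\overline U)\subset\overline U$ is bounded, so the pole of $\phi_{c_1\cdots c_n}$ lies outside $\overline U$ and $\phi_{c_1\cdots c_n}$ is holomorphic (indeed univalent) on a neighbourhood of $\overline U$.

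Next I would bound $|D\phi_{c_1\cdots c_n}|$ on $\overline U$ from both sides. By the chain rule $D\phi_{c_1\cdots c_n}(x)=\prod_{j=1}^n D\phi_{c_j}(y_j)$, where $y_n=x$ and $y_j=\phi_{c_{j+1}\cdots c_n}(x)$ for $j<n$, all lying in $\overline U$ by property (II). Since $|D\phi_{c_j}(y)|=|y+c_j|^{-2}$, and since $|y|\le 1/\sqrt2<1$ for $y\in\overline U$ while $|c_j|\ge\sqrt 8>1$ for $c_j\in\mathbb D_2$, we get $0<|c_j|-1<|c_j|-|y_j|\le|y_j+c_j|\le|c_j|+|y_j|<|c_j|+1$. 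Multiplying over $j$ gives
\[\prod_{j=1}^n\frac{1}{(|c_j|+1)^2}<|D\phi_{c_1\cdots c_n}(x)|<\prod_{j=1}^n\frac{1}{(|c_j|-1)^2}\qquad(x\in\overline U),\]
and by compactness of $\overline U$ the same strict inequalities hold with $\inf_{\overline U}|D\phi_{c_1\cdots c_n}|$ and $\sup_{\overline U}|D\phi_{c_1\cdots c_n}|$ in place of the middle term.

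Finally I would deduce the two diameter bounds. For the upper bound, the chord identity gives $|\phi_{c_1\cdots c_n}(a)-\phi_{c_1\cdots c_n}(b)|\le{\rm diam}(\overline U)\cdot\sup_{\overline U}|D\phi_{c_1\cdots c_n}|$ for $a,b\in\overline U$; taking the supremum over $a,b$ and using ${\rm diam}(\overline U)=\sqrt2$ yields ${\rm diam}(\phi_{c_1\cdots c_n}(\overline U))\le\sqrt2\,\sup_{\overline U}|D\phi_{c_1\cdots c_n}|<2\prod_{j=1}^n(|c_j|-1)^{-2}$. For the lower bound, take $a_0,b_0$ to be two opposite corners of $\overline U$, so $|a_0-b_0|=\sqrt2$; then the chord identity and the derivative estimate give ${\rm diam}(\phi_{c_1\cdots c_n}(\overline U))\ge|\phi_{c_1\cdots c_n}(a_0)-\phi_{c_1\cdots c_n}(b_0)|>\sqrt2\prod_{j=1}^n(|c_j|+1)^{-2}\ge\gamma\prod_{j=1}^n(|c_j|+1)^{-2}$, where the last inequality uses $\gamma=2|\xi|/(|\xi|+1)^2\le 1/2<\sqrt2$ (equivalently $(|\xi|-1)^2\ge0$).

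The computation is essentially routine; the points that need care are checking that $\phi_{c_1\cdots c_n}$ is a bona fide Möbius map whose pole avoids $\overline U$ so that the chord identity applies (immediate from boundedness of the image), the elementary comparisons supplied by the $\mathbb D_2$ condition, and observing that $\gamma<\sqrt2$, which shows the clean estimate via opposite corners already delivers the stated (weaker‑looking) lower bound---so recording the lemma with $\gamma$, matching \cite[Lemma~2.4]{Ger}, costs nothing. Alternatively the upper bound can be obtained from the mean value inequality along segments using convexity of $\overline U$; routing both bounds through the Möbius identity is marginally cleaner and handles them uniformly.
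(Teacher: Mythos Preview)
Your proof is correct and takes a genuinely different route from the paper's. The paper first identifies $\phi_{c_1\cdots c_n}(\overline U)$ with the Hurwitz cylinder via \cite[Lemma~4.3]{NT2}, quotes the estimate $\gamma/|q_n|^{2}\le{\rm diam}\le 2/|q_n|^{2}$ from \cite[Lemma~2.4]{Ger}, and then bounds $|q_n|$ by unwinding the recursion \eqref{pq} as $|q_n|=\prod_{j=1}^{n}|c_j+q_{j-2}/q_{j-1}|$ together with Hurwitz's strict monotonicity of $|q_n|$ to force $|q_{j-2}/q_{j-1}|<1$. You instead work entirely inside the M\"obius structure: the exact chord identity $|\phi(a)-\phi(b)|=|a-b|\sqrt{|D\phi(a)||D\phi(b)|}$ reduces the diameter to pointwise derivative bounds, and the chain rule plus $|y|\le 1/\sqrt2$ on $\overline U$ and $|c_j|\ge\sqrt8$ on $\mathbb D_2$ give those bounds directly. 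Your argument is self-contained (no appeal to \cite{Ger} or to the monotonicity from \cite{H87}) and in fact yields the sharper lower constant $\sqrt2$ in place of $\gamma$; the paper's route, by contrast, explains where the specific constant $\gamma$ comes from and ties the lemma to the denominator $q_n$, which is the natural quantity in continued-fraction arguments.
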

\begin{proof}
 Let $n\in \mathbb N$ and $(c_1,\ldots, c_n)\in  \mathbb D_2^n$.
By \cite[Lemma~4.3]{NT2}, ${\rm diam}( \phi_{c_1,\ldots, c_n}(\overline{U}))$ equals the diameter of the set of elements of $U$ which have the finite or infinite Hurwitz continued fraction expansion %\eqref{H-exp} 
beginning with $c_1,\ldots,c_n$.
 By \cite[Lemma~2.4]{Ger} we have
\begin{equation}\label{diam-lem-eq1}\frac{\gamma}{|q_n|^2}\le {\rm diam}( \phi_{c_1\cdots c_n}(\overline{U}))\le\frac{2}{|q_n|^2}.\end{equation}
%We estimate $|q_n|$.
By \eqref{pq}
we have
\begin{align*}
|q_n|=|c_nq_{n-1}+q_{n-2}|=|q_{n-1}|\left|c_n+\frac{q_{n-2}}{q_{n-1}}\right|=\cdots=\prod_{j=1}^n\left|c_j+\frac{q_{j-2}}{q_{j-1}}\right|.
\end{align*}
Since $\{|q_n|\}_{n=1}^\infty$ is strictly increasing \cite[p.195]{H87}, for all $1\leq j\leq n$ we have 
\[
|c_j|-1< |c_j|-\left|\frac{q_{j-2}}{q_{j-1}}\right|\le \left|c_j+\frac{q_{j-2}}{q_{j-1}}\right|\le |c_j|
+\left|\frac{q_{j-2}}{q_{j-1}}\right|< |c_j|+1.
\]
Therefore 
\begin{equation}\label{diam-lem-eq2}\prod_{j=1}^n(|c_j|-1)< |q_n|< \prod_{j=1}^n(|c_j|+1).\end{equation}
Combining \eqref{diam-lem-eq1} and \eqref{diam-lem-eq2} yields the desired double inequalities.
\end{proof}

\if0
\begin{lem}
\label{proper2-new}\textcolor{red}{replace this by Lemma~\ref{diam-lem}}
Let $\{\phi_i\colon i\in I\}$ be a 
$2$-decaying conformal IFS on $\Delta$.
Let $\Delta'\subset\mathbb C$ be an open set such that $\Delta\subset\Delta'$ and $\overline{\Delta'}\subset\widetilde\Delta$.
For any $\theta\in(0,1)$
there exists $C_{\theta}\geq1$ such that 
for all $n\in\mathbb N$ and $(i_1,\ldots, i_n)\in  I^n$ such that all the indices $i_1,\ldots,i_n$ are distinct, 
\[C_{\theta}^{- 1}\prod_{k=1}^n\frac{1}{|i_k|^{2+\theta}}\le {\rm diam}(\phi_{i_1\cdots i_n}(\Delta'))\le C_{\theta}\prod_{k=1}^n\frac{1}{|i_k|^{2-\theta}}.\]
\end{lem}
\fi
\if0\begin{proof}Fix $z\in \Delta'$ and $\delta>0$ such that $B(z,\delta)\subset \Delta'$.  
By Lemma~\ref{distortion-lem}, there is  $K\geq1$ such that for any $n\in\mathbb N$ and any $(i_1,\ldots, i_n)\in  I^n$,
\begin{equation}\label{diam-eq1}{\rm diam}(\phi_{i_1\cdots i_n}(\Delta'))\leq K\cdot{\rm diam}(\Delta')|D\phi_{i_1\cdots i_n}(z)|.\end{equation}
By Lemma~\ref{conf-lem} we have 
\[ B(\phi_{i_1\cdots i_n}(z), \delta|D\phi_{i_1\cdots i_n}(z)|/(3K))\subset \phi_{i_1\cdots i_n}( B(z, \delta) ),\] 
and in particular
\begin{equation}\label{diam-eq2}{\rm diam}(\phi_{i_1\cdots i_n}(\Delta'))\geq \frac{\delta}{3K}|D\phi_{i_1\cdots i_n}(z)|.\end{equation}

 Since $\{\phi_i\colon i\in I\}$ is 
$2$-decaying, there exists $N\geq1$ such that
for any $i\in I$ with $|i|>N$ we have
\begin{equation}\label{diam-eq3}\frac{C_0^{-1} }{|i|^{2} }\geq\frac{1}{|i|^{2+\theta}}\ \text{ and }\ \frac{C_0 }{|i|^{2} }\leq\frac{1}{|i|^{2-\theta}}.\end{equation}
Combining \eqref{diam-eq1} and the second inequality in \eqref{diam-eq3} yields
\[ {\rm diam}(\phi_{i_1\cdots i_n}(\Delta'))\le K\cdot{\rm diam}(\Delta')C_0^{\#\{1\leq k\leq n\colon |i_k|\leq N\}}\prod_{k=1}^n\frac{1}{|i_k|^{2-\theta}}.\]Combining \eqref{diam-eq2} and the first inequality in \eqref{diam-eq3} yields
\[ {\rm diam}(\phi_{i_1\cdots i_n}(\Delta'))\geq
\frac{\delta}{3K}C_0^{-\#\{1\leq k\leq n\colon |i_k|\leq N\}}\prod_{k=1}^n\frac{1}{|i_k|^{2+\theta}}.\]
Since all the indices $i_1,\ldots,i_n$ are distinct, we have 
$\#\{1\leq k\leq n\colon |i_k|\leq N\}\leq (2\sqrt{N}+1)^2$.
Hence the desired assertion follows.
\end{proof}\fi
%\begin{lem}\cite[Lemma 4.1]{N24}\textcolor{red}{not needed}\label{proper3-new}Let $\{\phi_i\colon i\in I\}$ be a $2$-decaying conformal IFS on $\Delta$.There exists $L>0$ such that for all $n\in\mathbb N$ and $(i_1,\ldots, i_n)\in  I^n$, we have \[L^{-1}||D\phi_{i_1\cdots i_n}||\leq {\rm diam}(\phi_{i_1\cdots i_n}(\Delta'))\leq L||D\phi_{i_1\cdots i_n}||.\]\end{lem}

\if0
Let $F\subset [0,1]$ be a set. We say $f\colon F\to [0,1]$ is {\it H\"older continuous with exponent} $\gamma\in(0,1]$ if there exists $C>0$ such that
\[|f(x)-f(y)|\le C|x-y|^{\gamma}\ \text{ for all } x, y\in F.\]
\begin{lem}[{\cite[Proposition~3.3]{Fal14}}]
\label{Holder-F}
Let $F\subset [0,1]$ and let $f\colon F\to [0,1]$ be H\"older continuous with exponent $\gamma\in(0,1]$.
Then \[\dim_{\rm H}F\geq\gamma\cdot\dim_{\rm H}f(F).\]\end{lem}
\fi
\section{On the proof of the main result}
In this section we complete the proof of Theorem~\ref{thmHI}.
%In Section~\ref{seed-sect} we introduce a seed set and estimate its Hausdorff dimension. In Section~\ref{sectthmHI} we modify the seed set and complete the proof of Theorem~\ref{thmHI}.

\subsection{Dimension estimate of a seed  set}\label{seed-sect}
We introduce the maximum 
norm $\|x\|_\infty=\max\{|{\rm Re}(x)|,|{\rm Im}(x)|\}$ for $x\in \mathbb C$.
%\textcolor{red}{Sorry, $\|\cdot\|_\infty$ seems natural, see wikipediahttps://ja.wikipedia.org/wiki/%E3%83%8E%E3%83%AB%E3%83%A0
%}\textcolor{cyan}{Agree}
Note that $\|x\|_\infty\leq|x|\leq\sqrt{2}\|x\|_\infty$.
We put $t=3$, and introduce a seed set 
\[S=\{x\in U\setminus\mathbb Q^2\colon t^{n}\leq \| c_n(x)\|_{\infty}< 2 t^{n}\ \text{for all}\ n \ge 1\}.\]
For any $x\in S$ the sequence $\{\|c_n(x)\|_{\infty}\}_{n=1}^\infty$ is strictly increasing, and so $S\subset E$. 
\begin{pro}
\label{seed-Prop} 
%For any $t\in \mathbb N_{\geq2}$, we have 
$\dim_{\rm H}S\geq1$.
%\textcolor{red}{$\dim_{\rm H}E_t\leq1$ is not needed.}
\end{pro}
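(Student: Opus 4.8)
The goal is to show $\dim_{\rm H}S\geq 1$, where $S$ consists of points whose $n$-th Hurwitz partial quotient has sup-norm between $t^n$ and $2t^n$ (with $t=3$). The plan is to realize $S$ — or a suitable subset of it — as the limit set of a sub-system of the iterated function system $\{\phi_i : i\in\mathbb D_2\}$ and to use a mass distribution / Frostman-type argument (or the standard lower-bound technique for infinite IFS limit sets) to bound its dimension from below by $1-\varepsilon$ for every $\varepsilon>0$.

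First I would set up the combinatorial structure: for each $n$, let $\mathbb A_n = \{i\in\mathbb Z^2 : t^n\leq \|i\|_\infty < 2t^n\}$, so that $S$ is (contained in, and essentially equal to) the set of $\Pi(\underline\omega)$ with $\omega_n\in\mathbb A_n$ for all $n$. Note $\#\mathbb A_n \asymp t^{2n}$ and every $i\in\mathbb A_n$ has $|i|\asymp t^n$; also $\mathbb A_n\cap\mathbb A_m=\emptyset$ for $m\ne n$, which is what forces $S\subset E$. Next I would construct a probability measure $\mu$ on $S$ by a Cantor-type construction along the tree: distribute mass at level $n$ uniformly among the $\#\mathbb A_n$ choices of $\omega_n$, i.e. $\mu$ is the pushforward under $\Pi$ of $\bigotimes_{n\geq 1}(\text{uniform on }\mathbb A_n)$. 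Then I would estimate, for a cylinder $\phi_{\omega_1\cdots\omega_n}(\overline U)$, both its $\mu$-measure, which is $\prod_{j=1}^n (\#\mathbb A_j)^{-1} \asymp \prod_{j=1}^n t^{-2j}$, and its Euclidean diameter, which by Lemma~\ref{diam-lem} is comparable to $\prod_{j=1}^n |c_j|^{-2}\asymp \prod_{j=1}^n t^{-2j}$ as well, since $|c_j|\asymp t^j$ and $t^j\to\infty$. The key point is that $\mu(\text{cylinder})$ and $\operatorname{diam}(\text{cylinder})^s$ are comparable when $s$ is close to $1$: writing $\operatorname{diam}\asymp \prod t^{-2j} = t^{-n(n+1)}$ and $\mu\asymp t^{-n(n+1)}$, we get $\mu(\text{cyl}) \leq C\,\operatorname{diam}(\text{cyl})^{s}$ for any $s<1$ once $n$ is large (the discrepancy between $|c_j|-1$, $|c_j|+1$ and $t^j$, $2t^j$ only costs bounded multiplicative factors per level, and raising the diameter to a power $s<1$ instead of $1$ absorbs the slowly-growing product of those factors).

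The main obstacle — and the step requiring the most care — is passing from the comparison of $\mu$-measure and diameter on cylinders to a genuine Frostman condition $\mu(B(x,r))\leq C r^s$ on arbitrary balls. This is handled by the standard argument for conformal IFS limit sets with the open set condition and bounded distortion: a ball $B(x,r)$ meets only boundedly many level-$n$ cylinders of comparable size for the appropriate $n=n(r)$ (here one must use property (I), the open set condition, together with the distortion Lemmas~\ref{distortion-lem} and~\ref{conf-lem} to control overlaps and the ratio of diameters of sibling cylinders). A subtlety specific to the Hurwitz setting is that within a fixed level $n$ the maps $\phi_i$, $i\in\mathbb A_n$, have essentially the same contraction rate ($\asymp t^{-2n}$), so the children of a given cylinder all have comparable size — this is exactly what makes the ball-counting clean, and it is why the restriction $t^n\le\|c_n\|_\infty<2t^n$ (a single dyadic annulus) rather than a wider range is imposed.

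Once the Frostman bound $\mu(B(x,r))\leq C_s r^s$ is established for every $s<1$, the mass distribution principle (Frostman's lemma) gives $\dim_{\rm H}S\geq \dim_{\rm H}(\operatorname{supp}\mu)\geq s$, and letting $s\uparrow 1$ yields $\dim_{\rm H}S\geq 1$. I would organize the write-up as: (i) define $\mathbb A_n$, $\mu$; (ii) compute $\mu$ and $\operatorname{diam}$ on cylinders via Lemma~\ref{diam-lem}; (iii) fix $s<1$ and verify the cylinder comparison $\mu\leq C\operatorname{diam}^s$ for all large $n$, absorbing small-$n$ cylinders into the constant; (iv) convert to a ball estimate using (I), (III), (IV) and Lemmas~\ref{distortion-lem}–\ref{conf-lem}; (v) apply the mass distribution principle and let $s\to 1$. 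The only genuinely new input beyond textbook IFS dimension theory is bookkeeping the level-dependent alphabets $\mathbb A_n$, which is routine because each alphabet is a single annulus.
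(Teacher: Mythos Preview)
Your approach is exactly the one the paper takes: the same level sets $I^{(n)}=\mathbb A_n$, the same product-of-uniforms measure, the same cylinder estimate $\mu([\omega])\le C\,\mathrm{diam}(\phi_\omega(\overline U))$ via Lemma~\ref{diam-lem}, and the same conclusion by the mass distribution principle after passing from cylinders to balls.

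There is one point in step~(iv) where your outline is not quite right and where the paper has to work harder than ``the standard argument''. You say a ball $B(x,r)$ meets only \emph{boundedly many} cylinders of comparable size at the appropriate level $n(r)$. But here the system is non-autonomous and the contraction at step $n$ is $\asymp t^{-2n}$, so the ratio of the diameter of a level-$n$ cylinder to that of its level-$(n+1)$ children is $\asymp t^{2(n+1)}\to\infty$. Consequently there is in general no level $n$ with $d_n\asymp r$; for $r$ in the gap $(d_{n+1},d_n)$ the level-$n$ cylinders can have diameter up to $t^{2(n+1)}r$, while the level-$(n+1)$ cylinders are much smaller than $r$ and the ball may meet unboundedly many of them. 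The paper handles this by working with a stopping-time family $V^*(x,r)$ (derivative $\ge r$ but some child $<r$) and proving that both $\max_{\omega\in V^*(x,r)}\mathrm{diam}(\phi_\omega(\overline U))$ and $\#V^*(x,r)$ are bounded by $C\,r\cdot t^{O(\sqrt{\log(1/r)})}$ and $C\,t^{O(\sqrt{\log(1/r)})}$ respectively (Lemma~\ref{claim2}); the exponent $\sqrt{\log(1/r)}$ comes from $|\omega|\le\sqrt{2\log(1/r)}$, which in turn comes from $d_n\asymp t^{-n^2}$. These factors are not bounded, but they are sub-polynomial in $1/r$ and are therefore absorbed by $r^\varepsilon$, giving $\mu\circ\Pi^{-1}(B(x,r))\le r^{1-\varepsilon}$. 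So your plan goes through, but the ball-to-cylinder step needs this quantitative sub-polynomial bound rather than a uniform multiplicity bound.
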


Although this proposition can be proved using the general result in \cite{RU},
we give a self-contained proof.
%For any $n\ge 1,$ set
%\[I^{(n)}=\{i\in \mathbb D_2\colon t^{n}\leq \| i\|_{\infty}< 2t^{n}\},\] 
%and 
We set
\[I^{\infty}=\prod_{n=1}^{\infty}I^{(n)},\text{ where }I^{(n)}=\{i\in \mathbb D_2\colon t^{n}\leq \| i\|_{\infty}< 2t^{n}\}.\]
We endow each $I^{(n)}$ with the discrete topology, and $I^{\infty}$ with the product topology.
%adapting the proofs of \cite[Lemma~3.14]{MauUrb} and \cite[Theorem~3.2]{RU}. 
For a finite word $\omega$ from $\mathbb D_2$, let $|\omega|$ denote its word length. For $\underline{\omega}=(\omega_1,\omega_2,\ldots)\in I^\infty$ and $n\in\mathbb N$, let $\underline{\omega}|_n=\omega_1\cdots\omega_n$.
For a finite word $\tau$ from $\mathbb D_2$, define
\[[\tau]=\{\underline\omega\in I^\infty\colon\underline{\omega}|_{|\tau|}=\tau\}.\]
\if0
\begin{lem}
\label{constBPM}
 There exists a Borel probability measure $\mu$
on $I^{\infty}$ such that
 for all $n\in\mathbb N$ and all $(\omega_1,\ldots,\omega_n)\in \prod_{j=1}^n I^{(j)}$, 
\[\mu([\omega_1,\ldots, \omega_n])=\prod_{j=1}^n\frac{1}{12t^{2j}-4t^j}.\]

\end{lem}
\begin{proof}
\textcolor{red}{For each $n\in \mathbb N$ and $j\in I^{(n)},$ let $p^{(n)}_j=1/\# I^{(n)}=1/(12t^{2n}-4t^n),$ which gives a probability measure on $I^{(n)}.$ 
For each  $(\omega_1,\ldots,\omega_n)\in \prod_{j=1}^n I^{(j)},$ put $p_n([\omega_1,\ldots, \omega_n])=\prod_{j=1}^n{1}/p^{(n)}_j.$ }
By Kolmogorov's extension theorem, we obtain a Borel probability measure with the desired property.
\end{proof}
%\textcolor{red}{Take some base point $x_0\in X.$ }
\begin{lem}
\label{gibbs}
%For any $\theta\in(0,1)$ there exists $C_{\theta}\geq1$ such that for any $\omega \in \prod_{j=1}^{n}I^{(j)}, n\ge 1$,  \[\mu([\omega])\le  C_{\theta}||D\vph_{\omega}||^{2/(2+\theta)}.\]   
There exists $C\geq1$ such that for any $\omega \in \prod_{j=1}^{n}I^{(j)}, n\ge 1$, 
\[
\mu([\omega])\le  C|D\vph_{\omega}(0)|.
\]   
\end{lem}
\begin{proof}
%By Lemmas~\ref{proper2-new} and \ref{proper3-new}, for any $\theta\in(0,1)$ there exists $C_{\theta}\geq1$ such that
%\[\gamma\prod_{k=1}^n\frac{1}{(\sqrt{8}t^{k}+1)^{2}}\le \gamma\prod_{k=1}^n\frac{1}{(|\omega_k|+1)^{2}}\le {\rm diam}(\phi_{\omega}(\overline{U}))\le K{\rm diam}(\overline{U})\Vert D\vph_{\omega}\Vert.\]
Since $\overline{U}$ is convex, by Lemma~\ref{distortion-lem} we have
\[ {\rm diam}(\phi_{\omega}(\overline{U}))\le K\cdot{\rm diam}(\overline{U})|D\vph_{\omega}(0)|\]
for any $\omega \in \prod_{j=1}^{n}I^{(j)}$.
Write $\omega=(\omega_j)_{j=1}^n$.
Then $|\omega_j|<\sqrt{8}t^j$ for $1\leq j\leq n$. By this and Lemma~\ref{diam-lem},
\[{\rm diam}(\phi_{\omega}(\overline{U}))\geq
\gamma\prod_{j=1}^n\frac{1}{(|\omega_j|+1)^{2}}\geq
\gamma\prod_{j=1}^n\frac{1}{(\sqrt{8}t^{j}+1)^{2}}.\]
%\textcolor{red}{If $\Delta'$ is convex, then the last inequality follows from Lemma~\ref{distortion-lem}. Lemmas~\ref{proper2-new} and \ref{proper3-new} are not needed here.}
Combining these inequalities and Lemma~\ref{constBPM} gives the desired inequality.
\end{proof}
\textcolor{red}{The above two lemmas should be integrated into one as follows. The red part seems incorrect.}
\fi
\begin{lem}\label{unified-lem}
There exist a Borel probability measure $\mu$ on $I^{\infty}$ and a constant $C>0$ such that 
for all $n\in\mathbb N$ and all $\omega\in \prod_{j=1}^n I^{(j)}$, 
\[
\mu([\omega])\le  C\cdot{\rm diam}(\phi_{\omega}(\overline{U})).
\]   
\end{lem}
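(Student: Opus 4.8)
The plan is to take $\mu$ to be the product of the uniform probability measures on the finite index sets $I^{(n)}$; such a measure exists on $I^{\infty}=\prod_{n=1}^{\infty}I^{(n)}$ by Kolmogorov's extension theorem. Before computing anything I would observe that the constraint $i\in\mathbb D_2$ in the definition of $I^{(n)}$ is vacuous: since $t=3$, each $t^n$ is an integer and $t^n\ge 3$, so any $i\in\mathbb Z^2$ with $\|i\|_\infty\ge t^n$ has $|i|\ge\|i\|_\infty\ge 3$, hence $|i|^2\ge 9\ge 8$ and $i\in\mathbb D_2$. Thus $I^{(n)}=\{i\in\mathbb Z^2\colon t^n\le\|i\|_\infty<2t^n\}$ is exactly the set of lattice points in a square annulus with integer side lengths, and counting it as a difference of two axis-aligned squares gives $\#I^{(n)}=(4t^n-1)^2-(2t^n-1)^2=12t^{2n}-4t^n$. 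Consequently, for all $n\in\mathbb N$ and $\omega=(\omega_1,\dots,\omega_n)\in\prod_{j=1}^n I^{(j)}$,
\[
\mu([\omega])=\prod_{j=1}^n\frac{1}{12t^{2j}-4t^j}.
\]

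Next I would bound the diameter from below. Since $\omega_j\in I^{(j)}$ we have $|\omega_j|\le\sqrt{2}\,\|\omega_j\|_\infty<\sqrt{8}\,t^j$, so Lemma~\ref{diam-lem} gives
\[
{\rm diam}(\phi_{\omega}(\overline{U}))>\gamma\prod_{j=1}^n\frac{1}{(|\omega_j|+1)^2}>\gamma\prod_{j=1}^n\frac{1}{(\sqrt{8}\,t^j+1)^2}.
\]
It then suffices to compare the two products factor by factor: the asserted inequality holds with $C=1/\gamma$ provided
\[
\prod_{j=1}^n\frac{(\sqrt{8}\,t^j+1)^2}{12t^{2j}-4t^j}\le 1\qquad\text{for every }n\in\mathbb N,
\]
and for this it is enough that each factor be $\le 1$, i.e. that $8t^{2j}+2\sqrt{8}\,t^j+1\le 12t^{2j}-4t^j$, equivalently $4t^{2j}\ge(2\sqrt{8}+4)t^j+1$. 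Since $t^j\ge t=3\ge 1$, the right-hand side is at most $(2\sqrt{8}+5)t^j$, and $4t^{2j}-(2\sqrt{8}+5)t^j=t^j\bigl(4t^j-2\sqrt{8}-5\bigr)\ge t^j(12-2\sqrt{8}-5)=t^j(7-2\sqrt{8})>0$ because $2\sqrt{8}<7$. Hence every factor is in fact $<1$, the partial products stay below $1$, and combining with the diameter bound yields $\mu([\omega])<(1/\gamma)\,{\rm diam}(\phi_{\omega}(\overline{U}))$, so $C=1/\gamma$ works.

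I do not anticipate a real obstacle: the argument is essentially bookkeeping, and the only points that need care are the (vacuous) nature of the $\mathbb D_2$-constraint, which pins down $\#I^{(n)}$ exactly, and the elementary factor-by-factor estimate. The latter works precisely because the combinatorial growth rate $12t^{2j}$ of $\#I^{(j)}$ strictly dominates the geometric growth rate $8t^{2j}$ that the $2$-decay property (IV) forces on $(|\omega_j|+1)^{-2}$; this slack is exactly what makes the infinite product bounded, and it is also why the choice $t=3$ (rather than a $t$ too close to $1$) is made.
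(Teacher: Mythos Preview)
Your proposal is correct and follows essentially the same route as the paper: construct $\mu$ as the product of the uniform measures on the $I^{(j)}$ via Kolmogorov's extension theorem, compute $\#I^{(j)}=12t^{2j}-4t^j$, bound ${\rm diam}(\phi_{\omega}(\overline{U}))$ from below by Lemma~\ref{diam-lem} using $|\omega_j|<\sqrt{8}\,t^j$, and compare. The paper simply states that combining the two displayed estimates yields the inequality for some $C$, whereas you go further and carry out the factor-by-factor check to pin down the explicit constant $C=1/\gamma$; this extra work is sound and not required, but it is a nice sharpening.
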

\begin{proof}
For each $n\in\mathbb N$ let $P_n$
denote the uniform probability distribution on the discrete space $\prod_{j=1}^n I^{(j)}$. 
A direct calculation shows 
%$\textcolor{red}{\#I^{(j)}=(2(2t^j-1)+1)^2-(2t^j+1)^2=12t^{2j}-12t^j?}$
%\textcolor{cyan}{I think ${\#I^{(j)}=(2(2t^j-1)+1)^2-(2(t^j-1)+1)^2=16t^{2j}-8t^{j}+1-4t^{2j}+4t^j-1}=12t^{2j}-4t^j$}
$\#I^{(j)}=12t^{2j}-4t^j$ for all $j\geq1$. 
For each  $(\omega_1,\ldots,\omega_n)\in \prod_{j=1}^n I^{(j)}$ we have 
\[P_n(\{(\omega_1,\ldots, \omega_n)\})=\prod_{j=1}^n\frac{1}{\#I^{(j)}}=\prod_{j=1}^n\frac{1}{12 t^{2j}-4 t^j}.\]
By Kolmogorov's extension theorem,
there is a Borel probability measure $\mu$ on $I^{\infty}$ such that for all $n\in\mathbb N$ and all $(\omega_1,\ldots,\omega_n)\in \prod_{j=1}^n I^{(j)}$, 
\begin{equation}\label{mu-ineq}\mu([\omega_1,\ldots, \omega_n])=\prod_{j=1}^n\frac{1}{12 t^{2j}-4 t^j}.\end{equation}
%Since $\overline{U}$ is convex, by Lemma~\ref{distortion-lem} there exists $K\geq1$ such that 
%\begin{equation}\label{mu-ineq2} {\rm diam}(\phi_{\omega}(\overline{U}))\le K\cdot{\rm diam}(\overline{U})|D\vph_{\omega}(0)|\end{equation}
%for any $\omega=(\omega_1,\ldots,\omega_n) \in \prod_{j=1}^{n}I^{(j)}$.
For $1\leq j\leq n$, 
the definition of $I^{(j)}$ gives $|\omega_j|<\sqrt{8} t^j$.  By this and the first inequality in Lemma~\ref{diam-lem},
\begin{equation}\label{mu-ineq3}{\rm diam}(\phi_{\omega}(\overline{U}))\geq
\gamma\prod_{j=1}^n\frac{1}{(|\omega_j|+1)^{2}}>
\gamma\prod_{j=1}^n\frac{1}{(\sqrt{8}t^{j}+1)^{2}}.\end{equation}
Combining \eqref{mu-ineq} and \eqref{mu-ineq3} yields the desired inequality.\end{proof}

Since $\Pi(I^{\infty})\subset S,$ it suffices to show $\dim_{\rm H}\Pi(I^{\infty})\geq1$. 
We will pick a measure $\mu$ as in Lemma~\ref{unified-lem}, and apply the mass distribution principle to 
$\mu\circ(\Pi|_{I^\infty})^{-1}$. To this end we need some preliminary work.
Put \[r_0=\min_{i\in I^{(1)}}|D\phi_i(0)|.\]
For $x\in \Pi(I^\infty)$ and $r\in(0,r_0)$,
let $V(x,r)$ denote the set of finite words $\omega\in \bigcup_{n=1}^{\infty}\prod_{j=1}^nI^{(j)}$ that satisfy the following conditions:
\begin{itemize}
\item[(i)] $B(x, r)\cap \vph_{\omega}(\overline{U})\neq \emptyset;$
\item[(ii)] 
$|D\vph_{\omega}(0)|\geq r$, and
$|D\vph_{\omega a}(0)|< r$
for some $a\in I^{(|\omega|+1)}.$ 
\end{itemize}
Since each $I^{(j)}$, $j\geq1$ is a finite set, 
  $\min\{|\omega|\colon\omega\in V(x,r)\}\to\infty$ as $r\to0$.
%\textcolor{cyan}{Notice that if $\omega,\tau\in V(x,r)$ and $[\omega]\subset[\tau]$ then $|\omega|=|\tau|$. In particular,  if $\omega,\tau\in V(x,r)$ then $[\omega]\cap[\tau]=\emptyset$. }\textcolor{red}{I am sorry but why does this hold.  }
%\textcolor{red}{The following is key: Since $t^i\le ||\omega_i||\leq 2t^i$ the contracting ratios of $\phi_{\omega_i}$s are almost same. So we can control the length of finite words uniformly: $|\omega|\sim \sqrt{\log(1/r)}$.}
%In what follows, 
%the Const. denotes some constant which does not depend on $r, x$ and $n.$

Although the collection $\{[\omega]\colon\omega\in V(x,r)\}$ of cylinders covers
$I^\infty\cap \Pi^{-1}B(x, r)$, it may contain redundant cylinders. Hence
we make an adjustment. 
Let $V'(x,r)$ denote the set of $\omega\in V(x,r)$ such that  $[\omega]\subset[\tau]$ holds for some $\tau\in V(x,r)$ with $|\tau|<|\omega|$. Put $ V^*(x,r)=V(x,r)\setminus V'(x,r).$
\begin{lem}\label{adjust}
If
 $x\in \Pi(I^\infty)$ and $r\in(0,r_0)$
then for any $\omega\in V'(x,r)$ there is $\tau\in V^*(x,r)$ such that  $[\omega]\subset[\tau]$ and $|\tau|<|\omega|$.
\end{lem}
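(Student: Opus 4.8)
The plan is a minimal-word-length argument. Fix $x\in\Pi(I^\infty)$ and $r\in(0,r_0)$, and let $\omega\in V'(x,r)$. By the very definition of $V'(x,r)$, the set
\[W=\{\tau\in V(x,r)\colon [\omega]\subset[\tau]\ \text{and}\ |\tau|<|\omega|\}\]
is non-empty. Since word lengths are natural numbers, $W$ admits an element $\tau$ of minimal word length, and I claim this $\tau$ is the one we want: it satisfies $[\omega]\subset[\tau]$ and $|\tau|<|\omega|$ simply by membership in $W$, so it only remains to check $\tau\in V^*(x,r)$.

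To see this, I would suppose towards a contradiction that $\tau\in V'(x,r)$. Then, again by the definition of $V'(x,r)$, there is $\tau'\in V(x,r)$ with $[\tau]\subset[\tau']$ and $|\tau'|<|\tau|$. Since inclusion of cylinders is transitive, $[\omega]\subset[\tau]\subset[\tau']$; and $|\tau'|<|\tau|<|\omega|$. Hence $\tau'\in W$ with $|\tau'|<|\tau|$, contradicting the minimality of $|\tau|$. Therefore $\tau\in V(x,r)\setminus V'(x,r)=V^*(x,r)$, which completes the argument.

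I do not expect any real obstacle here: this lemma is a bookkeeping device. The one point worth spelling out is that iterating the defining property of $V'(x,r)$ strictly decreases the word length at each step, so it must terminate after finitely many steps in an element of $V^*(x,r)$; the displayed minimality argument merely packages this termination cleanly. (Note that $[\omega]\subset[\tau]$ with $|\tau|<|\omega|$ just says $\tau$ is a proper prefix of $\omega$, since every cylinder $[\omega]$ is non-empty as $\#I^{(j)}=12t^{2j}-4t^j\geq1$; but the proof uses only transitivity of $\subset$.) In the subsequent application this is what lets us replace the possibly redundant cover $\{[\omega]\colon\omega\in V(x,r)\}$ of $I^\infty\cap\Pi^{-1}B(x,r)$ by the pairwise non-nested subfamily indexed by $V^*(x,r)$.
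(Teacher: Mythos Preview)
Your argument is correct and is essentially the same descent on word length as in the paper: the paper iterates the defining property of $V'(x,r)$ until it lands in $V^*(x,r)$, while you package the same finite descent as a minimal-length selection. The content is identical.
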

\begin{proof}Let $\omega\in V'(x,r).$ There is $\tau_1\in V(x,r)$ such that $[\omega]\subset[\tau_1]$ and $|\tau_1|<|\omega|$. If $\tau_1\in V^*(x,r)$ then we are done. Otherwise, we have $\tau_1\in V'(x,r)$ and there is $\tau_2\in V(x,r)$ such that $[\omega]\subset[\tau_1]\subset[\tau_2]$ and $|\tau_2|<|\tau_1|<|\omega|$. Since $|\omega|$ is finite, repeating this procedure we can find $\tau\in V^*(x,r)$ with the desired property.
\end{proof}

\begin{lem}
\label{claim0}
If
 $x\in \Pi(I^\infty)$ and $r\in(0,r_0)$
then:
\begin{itemize}
\item[(a)] for all $\omega, \tau\in V^*(x,r)$ with $\omega\neq\tau$, $\vph_{\omega}({\rm int}\overline{U})\cap \vph_{\tau}({\rm int}\overline{U})=\emptyset;$
\item[(b)] $I^\infty\cap \Pi^{-1}B(x, r)\subset \bigcup_{\tau\in V^*(x,r)}[\tau].$
\end{itemize}
\end{lem}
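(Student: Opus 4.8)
The plan is to prove the two assertions separately; both reduce quickly to the structural properties~(I)--(III) of $\{\phi_i\colon i\in\mathbb D_2\}$ together with the combinatorial bookkeeping already built into the definition of $V^*(x,r)$ and into Lemma~\ref{adjust}.

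For part~(a), fix distinct $\omega,\tau\in V^*(x,r)$. The first step is to observe that neither of the cylinders $[\omega]$, $[\tau]$ can contain the other: if, say, $[\omega]\subset[\tau]$ with $\omega\neq\tau$, then $\tau$ is a proper prefix of $\omega$, so $|\tau|<|\omega|$, and since $\tau\in V(x,r)$ this would place $\omega$ in $V'(x,r)$, contradicting $\omega\in V^*(x,r)$. Hence $\omega$ and $\tau$ are incomparable for the prefix order, so there are a common prefix $\sigma$ (possibly empty), words $\omega',\tau'$ (possibly empty), and indices $i\neq j$ in $I^{(|\sigma|+1)}\subset\mathbb D_2$ with $\omega=\sigma i\omega'$ and $\tau=\sigma j\tau'$. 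By property~(II) each $\phi_c$ maps $\overline{U}$ into $\overline{U}$ and is open on ${\rm int}\overline{U}$, so $\phi_{\omega'}({\rm int}\overline{U})\subset{\rm int}\overline{U}$ and likewise $\phi_{\tau'}({\rm int}\overline{U})\subset{\rm int}\overline{U}$; consequently $\phi_{\omega}({\rm int}\overline{U})\subset\phi_{\sigma}(\phi_i({\rm int}\overline{U}))$ and $\phi_{\tau}({\rm int}\overline{U})\subset\phi_{\sigma}(\phi_j({\rm int}\overline{U}))$. Since $\phi_\sigma$ is univalent, hence injective, the set $\phi_{\omega}({\rm int}\overline{U})\cap\phi_{\tau}({\rm int}\overline{U})$ is contained in $\phi_\sigma(\phi_i({\rm int}\overline{U})\cap\phi_j({\rm int}\overline{U}))$, which is empty by the open set condition~(I) for $i\neq j$.

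For part~(b), let $\underline{\omega}\in I^\infty$ with $\Pi(\underline{\omega})\in B(x,r)$; the aim is to exhibit a prefix of $\underline{\omega}$ lying in $V^*(x,r)$. By the uniform contraction~(III) and the chain rule, $|D\phi_{\underline{\omega}|_n}(0)|\le\rho^n\to0$ as $n\to\infty$, whereas $|D\phi_{\underline{\omega}|_1}(0)|=|D\phi_{\omega_1}(0)|\ge r_0>r$ because $\omega_1\in I^{(1)}$. Let $n_0\ge2$ be minimal with $|D\phi_{\underline{\omega}|_{n_0}}(0)|<r$ and set $\omega=\underline{\omega}|_{n_0-1}$. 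Then $|D\phi_{\omega}(0)|\ge r$ by minimality, and $|D\phi_{\omega\,\omega_{n_0}}(0)|=|D\phi_{\underline{\omega}|_{n_0}}(0)|<r$ with $\omega_{n_0}\in I^{(n_0)}=I^{(|\omega|+1)}$, so condition~(ii) holds; condition~(i) holds because $\Pi(\underline{\omega})=\lim_n\phi_{\underline{\omega}|_n}(0)$ lies in the compact, hence closed, set $\phi_{\omega}(\overline{U})$ --- every term $\phi_{\underline{\omega}|_n}(0)$ with $n\ge|\omega|$ lies there --- and also in $B(x,r)$. Thus $\omega\in V(x,r)$ and $\underline{\omega}\in[\omega]$. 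If $\omega\in V^*(x,r)$, we are done; otherwise $\omega\in V'(x,r)$, and Lemma~\ref{adjust} supplies $\tau\in V^*(x,r)$ with $[\omega]\subset[\tau]$, whence $\underline{\omega}\in[\tau]$. In either case $\underline{\omega}\in\bigcup_{\tau\in V^*(x,r)}[\tau]$.

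I expect the one genuinely delicate point to be the opening observation in part~(a): it is precisely membership in $V^*(x,r)$, rather than merely in $V(x,r)$, that forbids the two cylinders from being nested, and this is exactly what makes the open set condition applicable. Once this is in place, part~(a) is a direct computation using injectivity of $\phi_\sigma$ and properties~(I)--(II), and part~(b) is a short argument combining the contraction estimate~(III), compactness of $\phi_\omega(\overline{U})$, and Lemma~\ref{adjust}; no new quantitative estimates are needed.
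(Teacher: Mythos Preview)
Your proof is correct and follows essentially the same approach as the paper: for (a) you use the definition of $V^*$ to rule out nesting and then invoke the open set condition along a common prefix, and for (b) you locate a first time the derivative drops below $r$ and apply Lemma~\ref{adjust}. Your argument is simply more explicit than the paper's compressed version, in particular spelling out why elements of $V^*(x,r)$ are prefix-incomparable and why the crossing time $n_0$ exists.
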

\begin{proof}
If $\omega$, $\tau\in W^*(x,r)$ and $\omega\neq\tau$ then   $\omega|_j\neq\tau|_j$ holds for some
 $j\in\min\{|\omega|,|\tau|\}$.
By (I) we have $\vph_{\omega}({\rm int}\overline{U})\cap \vph_{\tau}({\rm int}\overline{U})\subset \vph_{\omega|_j }({\rm int}\overline{U})\cap \vph_{\tau|_j }({\rm int}\overline{U})=\emptyset$,
verifying (a).

Let $\underline\omega\in I^\infty\cap \Pi^{-1}B(x, r)$. %Set \[n_0=\min \{n\in \mathbb N\colon{\rm diam}(\vph_{\omega|_n}(X))\le r\}.\] 
Then $|D\vph_{\underline\omega|_{n-1}}(0)|\geq r>|D\vph_{\underline\omega|_{n}}(0)|$
holds for some $n\geq2$. Since $\Pi(\underline\omega)\in B(x, r)\cap \vph_{\underline\omega|_{n-1}}(\overline{U})$ we have $\underline\omega|_{n-1}\in V(x,r).$
If $\underline\omega|_{n-1}\in V^*(x,r)$ then $[\underline\omega|_{n-1}]\subset\bigcup_{\tau\in V^*(x,r)}[\tau]$. If $\underline\omega|_{n-1}\in V'(x,r)$ then by Lemma~\ref{adjust}, $\underline\omega|_{n-1}\in[\tau]$ holds for some
 $\tau\in V^*(x,r)$ with $|\tau|<n-1$. Hence we obtain $\underline\omega\in [\tau]$, which 
 verifies (b). 
\end{proof}

\if0\begin{lem}
\label{length}
There exists a constant $C_3\geq1$ such that for all 
 $x\in \Pi(I^\infty)$, $r\in(0,r_0)$ 
and $\omega\in W(x, r),$ we have 
\end{lem}
\begin{proof}
Let $\omega\in V(x,r)$. 
%We have $t^j\leq|\omega_j|<\sqrt{8}t^j$ for $1\leq j\leq n$. Take $a\in I^{(n+1)}$ such that $|D\vph_{\omega a}(0)|\le r.$ By Lemma~\ref{distortion-lem}, \eqref{CF-der} and the first inequality in Lemma~\ref{diam-lem} we have
%\[\label{equar}\begin{split}r&\ge | D \vph_{\omega a}(0)|= |D\vph_{\omega}(\vph_a(0))||D\vph_{a}(0)|\\ &\ge \frac{1}{K }  \frac{{\rm diam}(\vph_{\omega}(\overline{U}))}{{\rm diam}(\overline{U})} \min \left\{|D\vph_i(0)|\colon i \in I^{(n+1)}\right\}\\ &\ge \frac{C_1}{K} \frac{{\rm diam}(\vph_{\omega}(\overline{U}))}{t^{2(n+1)}}\ge % K^{-1}C_1\frac{\gamma}{t^{2(n+1)}}\prod_{j=1}^{n}\frac{1}{(|\omega_j|+1)^2}\\ \frac{C_1}{K}\frac{\gamma}{t^{2(n+1)}}\prod_{j=1}^{n}\frac{1}{(\sqrt{8}t^j+1)^2}\ge \frac{C_1}{K}\frac{\gamma}{t^{2(n+1)}}\prod_{j=1}^{n}\frac{1}{t^{2(j+1)}}.\end{split}\]
By the second inequality in Lemma~\ref{diam-lem} we have
\[r\le {\rm diam}(\vph_{\omega}(\overline{U}))\le {2}\prod_{j=1}^{|\omega|}\frac{1}{(|\omega_j|-1)^2}\le 2\prod_{j=1}^{|\omega|}\frac{1}{\num^{2(j-1)}}.\]
To deduce the last inequality we have used
$\num^j-1\geq \num^{j-1}$. % for $t\geq2$.
%Consequently we obtain $C^{-1}t^{-2n^2}\leq r\leq Ct^{-n^2/2}$, 
%$r\leq C\cdot\num^{-|\omega|^2/2}$,
\begin{equation}\label{length}|\omega|\le C\sqrt{\log (1/r)}.\end{equation}
where $C\geq1$ is a uniform constant that is independent of $x$, $r$, $\omega$.
% Rearranging this inequality yields the desired one.
%\begin{equation}\label{Equ1}r\ge {\rm Const.}\frac{1}{t^{2(|\omega|+1)}}\prod_{i=1}^{|\omega|}\frac{1}{t^{2(i+1)}},\end{equation}
%and 
%\begin{equation}\label{Equ2}r\le  {\rm Const.}\prod_{i=1}^{|\omega|}\frac{1}{t^{2(i-1)}}\end{equation}
%Put $n:=|\omega|.$
%By (\ref{Equ1}), we have
%\[r \geq {\rm Const.} t^{-(n^2+5n+2)},\]
%which leads
%\[\log \left(\frac{{\rm Const.}}{r}\right) \leq (n^2+5n+2)\log t.\]
%Hence we have
%\[n \geq \frac{-5 + \sqrt{17+4\dfrac{\log({\rm Const.}/r)}{\log t}}}{2}.\]
%By (\ref{Equ2}), we have
%\[r \leq {\rm Const.} t^{-n(n-1)},\]
%which leads
%\[\log \left(\frac{{\rm Const.}}{r}\right) \geq n(n-1)\log t.\]
%Hence we have 
%\[n \leq \frac{1 + \sqrt{1+4\dfrac{\log({\rm Const.}/r)}{\log t}}}{2}.\]
%Therefore, we obtain
%\[\frac{-5 + \sqrt{17 + 4 \dfrac{\log({\rm Const.}/r)}{\log t}}}{2}\leq|\omega|\leq\frac{1 + \sqrt{1 + 4\dfrac{\log({\rm Const.}/r)}{\log t}}}{2},\] which implies the desired estimate.
\end{proof}
\fi

\begin{lem}
\label{claim2}
There exist $C>0$ and $r_1\in(0,r_0]$ 
 such that
for  all $r\in(0,r_1)$ and
 $x\in \Pi(I^\infty)$
we have: % following statements hold:
\begin{itemize}
\item[(a)]  ${\rm diam}(\phi_{\omega}(\overline{U}))\le Crt^{2 \sqrt{2\log (1/r)}} $ for any $\omega\in V(x,r)$;
\item[(b)] $\# V^*(x,r)\le  Ct^{2 \sqrt{2\log (1/r)}}.$
\end{itemize}
\end{lem}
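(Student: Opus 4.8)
The plan is to control, for $\omega\in V(x,r)$ with $n:=|\omega|$, the interrelated quantities $r$, $|D\phi_\omega(0)|$, ${\rm diam}(\phi_\omega(\overline U))$, ${\rm area}(\phi_\omega(\overline U))$ and $t^{n}$. First I would record, uniformly over all finite words $\omega\in\bigcup_m\prod_{j=1}^mI^{(j)}$, two standing estimates with constants independent of $\omega$: since such $\omega$ satisfy $t^j\le|\omega_j|<2\sqrt2\,t^j$ and the maps $\phi_\omega$ are all univalent on one fixed bounded open set containing $\overline U$ (items (II), (V)), Lemma~\ref{diam-lem}, Lemma~\ref{distortion-lem} and $\sum_{j\ge1}t^{-j}<\infty$ together yield constants $0<a_1\le a_2$ and $a_3>0$ with
\[a_1|D\phi_\omega(0)|\le {\rm diam}(\phi_\omega(\overline U))\le a_2|D\phi_\omega(0)|\quad\text{and}\quad {\rm area}(\phi_\omega(\overline U))\ge a_3|D\phi_\omega(0)|^2,\]
the area bound coming from ${\rm area}(\phi_\omega(\overline U))=\int_{\overline U}|D\phi_\omega|^2$, bounded distortion, and ${\rm area}(\overline U)=1$.

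For part (a), I fix $\omega\in V(x,r)$, $n=|\omega|$, and exploit condition (ii): there is $a\in I^{(n+1)}$ with $|D\phi_{\omega a}(0)|<r\le|D\phi_\omega(0)|$. Expanding $|D\phi_{\omega a}(0)|=|D\phi_\omega(\phi_a(0))|\,|D\phi_a(0)|$ and using $\phi_a(0)\in\overline U$, $|D\phi_a(0)|=|a|^{-2}>(8t^2t^{2n})^{-1}$ and $|D\phi_\omega(\phi_a(0))|\ge K^{-1}|D\phi_\omega(0)|$, I get $|D\phi_\omega(0)|\le 8Kt^2\,t^{2n}r$, hence ${\rm diam}(\phi_\omega(\overline U))\le 8Ka_2t^2\,t^{2n}r$. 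To bound $t^{2n}$ I combine $a_1r\le a_1|D\phi_\omega(0)|\le{\rm diam}(\phi_\omega(\overline U))$ with Lemma~\ref{diam-lem}, $|\omega_j|\ge t^j$ and $t^j-1\ge t^{j-1}$ (valid since $t=3$), getting $a_1r\le 2\prod_{j=1}^n(t^j-1)^{-2}\le 2t^{-n(n-1)}$, i.e.\ $n(n-1)\log t\le\log(2/a_1)+\log(1/r)$. Choosing $r_1\in(0,r_0]$ small enough that $\log(2/a_1)\le\log(1/r)$ for $r<r_1$, and using $\log 3>1$, this forces $n(n-1)\le 2\log(1/r)$, whence $n\le 1+\sqrt{2\log(1/r)}$ and $t^{2n}\le t^2t^{2\sqrt{2\log(1/r)}}$. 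Combining the two displayed bounds proves (a) with $C:=8Ka_2t^4$.

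For part (b), I would count $\{\phi_\omega(\overline U):\omega\in V^*(x,r)\}$ by dyadic diameter. By Lemma~\ref{claim0}(a) these cylinders have pairwise disjoint interiors; each meets $B(x,r)$; each satisfies ${\rm diam}(\phi_\omega(\overline U))\ge a_1|D\phi_\omega(0)|\ge a_1r$ (so $r\le a_1^{-1}{\rm diam}$) and ${\rm area}(\phi_\omega(\overline U))\ge a_3a_2^{-2}{\rm diam}(\phi_\omega(\overline U))^2$. For fixed $k\in\mathbb Z$, the cylinders with ${\rm diam}\in(2^{-k-1},2^{-k}]$ all lie in $B(x,(1+a_1^{-1})2^{-k})$ and each has area $\ge a_3a_2^{-2}4^{-k-1}$, so disjointness forces their number to be at most a constant $M$ independent of $k,x,r$. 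By (a) and ${\rm diam}\ge a_1r$, the diameters of all cylinders in $V^*(x,r)$ lie in $[a_1r,\,Crt^{2\sqrt{2\log(1/r)}}]$, so at most $2+\log_2(C/a_1)+2\log_2(t)\sqrt{2\log(1/r)}$ values of $k$ occur; multiplying, $\#V^*(x,r)=O(\sqrt{\log(1/r)})$, which for $r$ below a suitable $r_1$ is $\le Ct^{2\sqrt{2\log(1/r)}}$ after enlarging $C$. Taking $r_1$ and $C$ to serve both parts gives (b).

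I expect the upper estimate in (a) to be the crux: converting the two-sided information in (ii) (the child $\omega a$ falls below $r$, the word $\omega$ does not) into $|D\phi_\omega(0)|\lesssim t^{2n}r$ via bounded distortion, and then feeding in the geometric-series bound $t^{-n(n-1)}$ for the diameter of an $n$-cylinder of the seed type to pin down $n\le 1+\sqrt{2\log(1/r)}$ — this last step is exactly where the choice $t=3$ (so $\log t>1$) is essential to keep the exponent at $2\sqrt{2\log(1/r)}$. For (b) the one subtlety is that bounding $\#V^*(x,r)$ by a single area comparison inside $B(x,r+\max_\omega{\rm diam})$ is too lossy (it would give $t^{4\sqrt{2\log(1/r)}}$); the dyadic grouping, where each scale contributes only $O(1)$ disjoint cylinders meeting $B(x,r)$, is what recovers — in fact comfortably beats — the stated bound.
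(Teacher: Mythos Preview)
Your argument is correct. Part~(a) runs along the same lines as the paper's: both use bounded distortion to turn $|D\phi_{\omega a}(0)|<r$ into ${\rm diam}(\phi_\omega(\overline U))\lesssim r\,t^{2|\omega|}$, and then feed $r\le|D\phi_\omega(0)|$ into the diameter estimate of Lemma~\ref{diam-lem} to extract $|\omega|\lesssim\sqrt{2\log(1/r)}$.

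Part~(b) is where you diverge. The paper packs all the sets $\phi_\omega(\overline U)$, $\omega\in V^*(x,r)$, into a single ball of radius $\approx r\,t^{2\sqrt{2\log(1/r)}}$ and compares total area with the uniform lower bound ${\rm area}(\phi_\omega(\overline U))\gtrsim r^2$ coming from $|D\phi_\omega(0)|\ge r$; this one-shot volume argument actually delivers only $\#V^*(x,r)\lesssim t^{4\sqrt{2\log(1/r)}}$, a factor of $t^{2\sqrt{2\log(1/r)}}$ weaker than the stated bound (harmless for the application in Proposition~\ref{seed-Prop}, but a loss nonetheless). Your dyadic decomposition instead observes that cylinders of diameter $\approx 2^{-k}$ all sit in a ball of comparable radius, so disjointness and the area lower bound cap their number by an absolute constant $M$; since the diameters range over only $O(\sqrt{\log(1/r)})$ dyadic scales by~(a), you obtain $\#V^*(x,r)=O(\sqrt{\log(1/r)})$, which not only recovers the claimed bound but substantially improves it. The paper's method is quicker to write down; yours is sharper and in fact proves the lemma exactly as stated.
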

\begin{proof}In view of Lemma~\ref{distortion-lem},
fix a constant $K>1$ such that for all $\omega\in \bigcup_{n=1}^\infty\mathbb D_2^n$ we have
\[\sup_{x_1,x_2\in\Delta}\frac{|D\phi_\omega(x_1)|}{|D\phi_\omega(x_2)|}\leq K.\]
Let $\omega\in V(x,r)$.
 By Lemma~\ref{conf-lem}, the second inequality in Lemma~\ref{diam-lem} and
 $t^j\leq|\omega_j|<\sqrt{8} t^j$
for $1\leq j\leq |\omega|$, we have
\[\begin{split}r&\leq|D\phi_\omega(0)|\leq 9K\cdot{\rm diam}(\vph_{\omega}(\overline{U})\\
&\le 18K\prod_{j=1}^{|\omega|}\frac{1}{(|\omega_j|-1)^2}\le 18K\prod_{j=1}^{|\omega|}\frac{1}{t^{2(j-1)}}=\frac{18K}{t^{(|\omega|^2-|\omega|)}}  \leq  \exp\left(-\frac{|\omega|^2}{2}\right),\end{split}\]
where the last inequality holds if $r$ is sufficiently small. 
%\textcolor{red}{In the first ine, I think we need ${\rm diam}\phi_{\omega}(U)\ge C |D\phi_{\omega}(0)|$. Of course, this is well-known fact in the case conformal IFS (see e.g. Mauldin and Urba\'nski, Graph directed Markov systems, arguments in p.74).}
This gives
\begin{equation}\label{length}|\omega|\le \sqrt{2\log (1/r)}.\end{equation}
%where $C\geq1$ is a uniform constant that is independent of $x$, $r$, $\omega$.
%\textcolor{red}{$2(|\omega|+1)\leq C\sqrt{\log (1/r)}$ for (3.4)?}

Take $a\in I^{(|\omega|+1)}$ such that $|D\vph_{\omega a}(0)|< r.$ 
 We have
\begin{equation}
\begin{split}\label{equar}
r&> | D \vph_{\omega a}(0)|= |D\vph_{\omega}(\vph_a(0))||D\vph_{a}(0)|\\
&\ge \frac{1}{K }  \frac{{\rm diam}(\vph_{\omega}(\overline{U}))}{{\rm diam}(\overline{U})} \min \left\{|D\vph_i(0)|\colon i \in I^{(|\omega|+1)}\right\}\\
&\geq \frac{1}{\sqrt{2}K }  {\rm diam}(\vph_{\omega}(\overline{U}))\frac{C_1}{(\sqrt{8}t^{|\omega|+1})^2}\\
&= \frac{C_1}{72\sqrt{2}K} \frac{{\rm diam}(\vph_{\omega}(\overline{U}))}{t^{2|\omega| }}\geq\frac{C_1 }{72\sqrt{2}K} \frac{{\rm diam}(\vph_{\omega}(\overline{U}))}{t^{2\sqrt{2\log (1/r)}}}.
%\geq\frac{C_1 }{9\sqrt{2}K^2} \frac{|D\phi_\omega(0)| }{\num^{C_3\sqrt{\log (1/r)}}},
\end{split}
\end{equation}
%\textcolor{red}{$\min \left\{|D\vph_i(0)|\colon i \in I^{(|\omega|+1)}\right\}\ge C_1/|i|^2\ge C_1/(\sqrt{8}3^{|\omega|+1})^2?$}
 %| D \vph_{\omega a}(0)|= |D\vph_{\omega}(\vph_a(0))||D\vph_{a}(0)|\\
%&\ge K^{-1}  \frac{{\rm diam}(\vph_{\omega}(\overline{U}))}{{\rm diam}(\overline{U})} \min \left\{|D\vph_i(0)|\colon i \in I^{(|\omega|+1)}\right\}\\
%&\ge 
%{\rm Const.} \frac{{\rm diam}(\vph_{\omega}(\overline{U}))}{t^{2( D_2 \sqrt{\log (1/r)}+1)}}\ge 
%where $D_2$ comes from Lemma~\ref{length}.
%This shows (a).
The second inequality follows from 
Lemma~\ref{distortion-lem} and the convexity of $\overline{U}$.
The third and fourth ones follow from  \eqref{CF-der} and 
\eqref{length} respectively. The second equality is because $t=3$.
%To deduce the last inequality we have used $\phi_{\omega}(\overline{U})\supset B(\phi_\omega(0),|D\phi(0)|/(9K))$ that follows from Lemma~\ref{conf-lem} applied to $\phi_\omega$ and $B(0,1/3)$.
%\textcolor{cyan}{I think we can shorten the esti: We have
%\begin{equation}
%\begin{split}\label{equar}
%r&> | D \vph_{\omega a}(0)|= |D\vph_{\omega}(\vph_a(0))||D\vph_{a}(0)|\\
%&\ge \frac{1}{K }|D\phi_\omega(0)| \min \left\{|D\vph_i(0)|\colon i \in I^{(|\omega|+1)}\right\}\ge \frac{C_1}{8K} \frac{|D\phi_\omega(0)| }{\num^{2(|\omega|+1)}}\geq\frac{C_1 }{8K} \frac{|D\phi_\omega(0)|}{\num^{2(C_3\sqrt{\log (1/r)}+1)}},\end{split}
%\end{equation}
%as required in (a). 
%The second inequality follows from 
%Lemma~\ref{distortion-lem}.
%The third and fourth inequalities follow from  \eqref{CF-der} and 
%Lemma~\ref{length} respectively.}

Since $B(x, r)\cap \vph_{\omega}(\overline{U})\neq \emptyset$, \eqref{equar} implies \begin{equation}\label{eq1-leb}\bigcup_{\omega\in V(x,r)}\vph_{\omega}(\overline{U})\subset B\left(x, \left(1+\frac{72\sqrt{2}K}{C_1} t^{2 \sqrt{2\log (1/r)}}\right)r\right).\end{equation}
For each $\omega\in V^*(x,r)$, applying Lemma~\ref{conf-lem} to $\phi_\omega$ and $B(0,1/3)$ we get
$\phi_{\omega}({\rm int} \overline{U})\supset B(\phi_\omega(0),|D\phi(0)|/(9K))$. Using this inclusion and Lemma~\ref{claim0}(a) we get \begin{equation}\label{eq2-leb}\begin{split}{\rm Leb}\left(\bigcup_{\omega\in V^*(x,r)}\vph_{\omega}({\rm int}\overline{U} )\right)&=\sum_{\omega\in V^*(x,r)}{\rm Leb}(\vph_{\omega}({\rm int}\overline{U} ))\\
&\ge \frac{\pi}{81K^2}\sum_{\omega\in V^*(x,r)} |D\vph_{\omega}(0)|^2\\& \ge 
%\frac{1}{2}\pi(\frac{\delta}{3 K^2})^2 \sum_{\omega\in W^{\ast}(x,r)} {\rm diam}(\vph_{\omega}(\overline{U}))^2
\#V^*(x,r)\frac{\pi r^2}{81K^2},\end{split}\end{equation}
where ${\rm Leb}$ denotes the Lebesgue measure on $\mathbb R^2$. 
Combining \eqref{eq1-leb} and \eqref{eq2-leb} yields \begin{equation}\label{equar3}\# V^*(x,r)\le \frac{81K^2}{\pi r^2 } {\rm Leb}\left(B\left(x, \left(1+\frac{72\sqrt{2}K}{C_1}t^{2 \sqrt{2\log (1/r)}}\right)r\right)\right).\end{equation}
%={\rm Const.}t^{C^{\prime} \sqrt{\log (1/r)}},\] 
From \eqref{equar} and \eqref{equar3} the desired inequalities follow.
\end{proof}

\begin{proof}[Proof of Proposition~\ref{seed-Prop}]
Let $\mu$ be a Borel probability measure on $I^\infty$ given by Lemma~\ref{unified-lem}.
%and estimate the value of $\mu\circ\Pi^{-1}((B(x, r)).$ 
%where $\Pi(\mu)$ is the push-forward of $\mu$ by $\Pi.$ 
Let
$\varepsilon\in (0, 1)$.
 Let $r\in(0,r_1)$ and let $x\in\Pi(I^{\infty})$.
%$r\in(0,r_0)$ be sufficiently small so that
%\begin{equation}\label{control1}t^{2n(1+2/(2+\theta)-\varepsilon)}\prod_{j=1}^n\frac{1}{t^{(2-\theta)\varepsilon j}}\leq 1.\end{equation}
%\begin{equation}\label{control1}t^{C_4 \sqrt{\log (1/r)}(1-\varepsilon)}\prod_{j=1}^{\lfloor C_3^{-1} \sqrt{\log (1/r)}\rfloor}\frac{C_2^{\varepsilon}}{t^{2 \varepsilon j}}\leq1.\end{equation} 
By Lemma~\ref{claim0}(b), Lemma~\ref{unified-lem} and Lemma~\ref{claim2}, we have \begin{equation}\label{control3}\begin{split}\mu\circ(\Pi|_{I^\infty})^{-1}((B(x, r))&\le %\mu \left(\bigcup_{\tau\in W^{\ast}(x,r)}[\tau]\right)
 \sum_{\tau\in V^*(x,r)}\mu([\tau])\leq C\sum_{\tau\in V^*(x,r)}{\rm diam}(\phi_{\tau}(\overline{U}))\\
 &\leq C\# V^*(x,r) \max_{\tau\in V^*(x,r)}{\rm diam}(\phi_{\tau}(\overline{U}))\\
 &\leq C  t^{4\sqrt{2\log (1/r)}}r^\varepsilon r^{1-\varepsilon}\leq r^{1-\varepsilon}\end{split}\end{equation}
 for some $C\geq1$. 
%By \eqref{CF-der} and the first inequality in Lemma~\ref{length}, for any $\tau\in W^*(x,r)$ we have\begin{equation}\label{control2}|D\vph_{\tau}(0)|\leq\prod_{j=1}^{|\tau| }\frac{C_2 }{t^{2  j}}\leq\prod_{j=1}^{\lfloor C_3^{-1} \sqrt{\log (1/r)}\rfloor}\frac{C_2 }{t^{2  j}}.\end{equation}
 %By Lemma~\ref{unified-lem},
%for $\theta\in(0,1)$ there exists $C_{\theta}\geq1$ such that for any $\tau\in W^{\ast},$ we have \[\mu([\tau])\le C_{\theta}||D\vph_{\tau}||^{\varepsilon}||D\vph_{\tau}||^{2/(2+\theta)-\varepsilon}.\]
%there is $C\geq1$ such that 
%$\mu([\tau])\le C| D\vph_{\tau}(0)|$
%for any $\tau\in W^{\ast}(x,r)$.
%$||D\vph_{\tau}||\le {\rm Const.}t^{2n} r$ (this follows from \eqref{equar}), 
  %\begin{equation*} \begin{split}&\mu\circ\Pi^{-1}((B(x, r))\le \# W^{\ast}C_{\theta}||D\vph_{\tau}||^{\varepsilon}||D\vph_{\tau}||^{2/(2+\theta)-\varepsilon}\\&\le {\rm  Const. } t^{2n(1+2/(2+\theta)-\varepsilon)}\prod_{j=1}^n\frac{1}{t^{(2-\theta)\varepsilon j}}r^{2/(2+\theta)-\varepsilon}\leq {\rm  Const. } r^{2/(2+\theta)-\varepsilon}.\end{split}\end{equation*}
%\begin{equation*}\begin{split}\mu\circ(\Pi|_{I^\infty})^{-1}((B(x, r))&\le %C\# W^{\ast}(x,r) \textcolor{red}{\max_{\tau\in W^*(x,r)}}|D\vph_{\tau}(0)|^{\varepsilon}|D\vph_{\tau}(0)|^{1-\varepsilon}\\
%&\le %Cr^{1-\varepsilon} t^{C\sqrt{\log (1/r)}(1-\varepsilon)}\prod_{j=1}^{\lfloor D_1 \sqrt{\log (1/r)}\rfloor}\frac{C_2^{\varepsilon}}{t^{2 \varepsilon j}}\leq 
%Cr^{1-\varepsilon}.\end{split}\end{equation*}
The last inequality holds if $r$ is sufficiently small.
The mass distribution principle \cite{Fal14} yields $\dim_{\rm H} \Pi(I^\infty)\ge1-\varepsilon.$ Since $\varepsilon\in(0,1)$ is arbitrary we obtain 
$\dim_{\rm H} \Pi(I^\infty)\ge1$ as required.
\end{proof}

\subsection{Modifying the seed set}\label{sectthmHI}

%For any rational number $\varepsilon>0$ choose a sequence of integers $\{n_k\}_{k=1}^{\infty}$ such that for all $k\ge 1,$  \[n_1\ge \frac{10}{\varepsilon},\ \varepsilon \sqrt{n_k}\in \mathbb N\ \text{and}\ n_{k}\gg n_i\ (i=1,\ldots, k-1).\]
Let $\varepsilon>0$. Fix a strictly increasing sequence $\{n_k\}_{k=1}^\infty$ of positive integers such that
\begin{equation}\label{H-ineq}t^{n_k+1}>2 t^{n_k}+\sqrt{2\varepsilon n_k} \ \text{ for all }k\geq1,\end{equation}
\begin{equation}\label{H-ineq2}\sum_{j=1}^k \varepsilon n_j(n_j+2)\leq 2\varepsilon n_k^2\ \text{ for all }k\geq1,\end{equation}
 \begin{equation} 
 \label{equanew}
 \prod_{j=1}^{n+1}\frac{1}{(\sqrt{2} t^{j+1})}\cdot \frac{1}{t^{2\varepsilon n^2}}
\geq \left(\prod_{j=1}^n\frac{1}{t^{2(j-1)}}\right)^{1+5\varepsilon}\ \text{ for all }n\geq n_1.
\end{equation}

For each $k\in \mathbb N$, 
define an integer square %\[\Theta_k=\{x+y\sqrt{-1}\colon x,y\in \{1,\ldots,\varepsilon \sqrt{k}\}\}.\] 
%We insert elements of $\Theta_k$ into the digit sequences of points in $E_t$, and construct a new subset of $U\setminus\mathbb Q^2$. 
%Let $\{W_k\}_{k=1}^\infty$ be a sequence of digits such that 
%\[W_k=(2t^{n_k}+2t^{n_k} \sqrt{-1})+\Theta_{n_k}.\]
%\[W_k=\{2t^{n_k}+x+(2t^{n_k}+y) \sqrt{-1}\colon x,y\in \{1,\ldots,\varepsilon \sqrt{k}\}\}.\]
%\[W_k=\{2t^{n_k}+2t^{n_k} \sqrt{-1}+z\colon z\in\mathbb Z(\sqrt{-1}),\ 1\leq \|z\|_\infty\leq\varepsilon \sqrt{k}\}\}.\]
\[W_k=\{2t^{n_k}+2t^{n_k} \sqrt{-1}+x\colon z\in\mathbb Z(\sqrt{-1}),\ {\rm Re}(x)\geq0, {\rm Im}(x)\geq0,\ \|x\|_\infty\leq\sqrt{\varepsilon n_k}-1\},\]
and write $W_k=\{w^{(k)}_1,\ldots,w^{(k)}_{\#W_k}\}$. 
By \eqref{H-ineq} we have
\[\max_{x\in W_k}\|x\|_\infty< 2t^{n_k}+\sqrt{\varepsilon n_k}<t^{n_k+1}<2t^{n_{k+1}}
\leq\min_{x\in W_{k+1}}\|x\|_\infty,\]
which implies that the integer squares are pairwise disjoint. 

Let $y\in S$.
%and put $c_0(y)=0$ for convenience (\textcolor{red}{do we need $c_0(y)$?}).
We
 insert the elements of $\bigsqcup_{k=1}^\infty W_{k}$ into $\{c_n(y)\}_{n\in\mathbb N}$ to define a new sequence 
 \[\begin{split}&\ldots,c_{n_k-1}(y),\ c_{n_k}(y),\ \fbox{$w_{1}^{(k)},\ldots,w_{\#W_k }^{(k)}$},\ c_{n_k+1}(y),\ldots\\
&\ldots,c_{n_{k+1}}(y),\ \fbox{$w_{1}^{(k+1)},\ldots,w_{\#W_{k+1}}^{(k+1)}$},\ c_{n_{k+1}+1}(y),\ldots\end{split}\]
Let $x(y)$ denote the point in $U\setminus\mathbb Q^2$ whose Hurwitz continued fraction expansion is given by this new sequence. Let $\widetilde S$ denote the collection of these points:
\[\widetilde S=\{x(y)\in U\setminus\mathbb Q^2\colon y\in S\}.\]

\begin{lem}
\label{hougan1}
%We have 
$\widetilde S\subset H.$
\end{lem}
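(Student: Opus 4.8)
\textbf{Proof plan for Lemma~\ref{hougan1}.}

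The plan is to show that for every $x(y) \in \widetilde S$, the set $\{c_n(x(y)) \colon n \in \mathbb N\}$ contains infinitely many homothetic copies of any prescribed finite $A \subset \mathbb Z^2$. First I would fix a finite set $A \subset \mathbb Z^2$ and observe that a homothetic copy $v + nA$ of $A$ sits inside the axis-aligned integer square $W_k$ as soon as the ``radius'' $\sqrt{\varepsilon n_k}-1$ of $W_k$ is large enough to accommodate $nA$ after translation — concretely, once $n \cdot \operatorname{diam}_\infty(A) \le 2(\sqrt{\varepsilon n_k}-1)$ and the translate $v + nA$ is positioned in the nonnegative quadrant anchored at $2t^{n_k}+2t^{n_k}\sqrt{-1}$. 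Since $n_k \to \infty$, this holds for all sufficiently large $k$; one takes, say, $n = 1$ and $v$ chosen so that $v + A \subset W_k$ (translating $A$ so its coordinatewise minimum is $0$ and adding the corner $2t^{n_k}(1+\sqrt{-1})$), which already gives a homothetic copy (with $n=1$) inside $W_k$ for every large $k$.

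The key remaining point is that the elements of $W_k$ actually appear among the partial quotients of $x(y)$: by construction, the Hurwitz continued fraction expansion of $x(y)$ is the sequence obtained by splicing the blocks $w_1^{(k)},\ldots,w_{\#W_k}^{(k)}$ into $\{c_n(y)\}_n$, so $W_k \subset \{c_n(x(y)) \colon n \in \mathbb N\}$ for every $k$. Here I would want to remark that this splicing indeed defines a legitimate point of $U \setminus \mathbb Q^2$: each inserted partial quotient lies in $\mathbb D_2$ (indeed in $\mathbb Z(\sqrt{-1}) \setminus \{\pm 1, \pm\sqrt{-1}\}$), since every $w \in W_k$ satisfies $\|w\|_\infty \ge 2t^{n_k} - (\sqrt{\varepsilon n_k}-1)$, which is $\gg \sqrt{8}$ for large $k$, hence $|w|^2 \ge 8$; and small values of $k$ can be absorbed by noting $t^{n_k} = 3^{n_k} \ge 3$ so $\|w\|_\infty \ge 2 \cdot 3 - (\sqrt{\varepsilon n_1}-1)$, which one arranges to be $\ge 3$ by the growth conditions — or, more cleanly, one simply notes the inserted words together with the tail of $y$'s expansion form an admissible Hurwitz sequence because admissibility for the Hurwitz expansion only constrains each partial quotient to lie in $\mathbb Z(\sqrt{-1}) \setminus \{\pm 1, \pm\sqrt{-1}\}$ (this is the content of the uniqueness statement cited from \cite[Theorem~6.1]{DN}), which all our entries satisfy.

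Assembling these: given finite $A \subset \mathbb Z^2$, for all sufficiently large $k$ there exists $v_k \in \mathbb Z^2$ with $v_k + A \subset W_k \subset \{c_n(x(y)) \colon n \in \mathbb N\}$; since the $W_k$ are pairwise disjoint, distinct large $k$ yield distinct homothetic copies, so there are infinitely many. As $A$ was arbitrary, $x(y) \in H$; as $y \in S$ was arbitrary, $\widetilde S \subset H$. I expect the only genuine subtlety — and the one to state carefully rather than compute — is the verification that every inserted entry belongs to the allowed alphabet $\mathbb Z(\sqrt{-1}) \setminus \{\pm 1, \pm\sqrt{-1}\}$ (equivalently that the spliced sequence is a bona fide Hurwitz expansion); the combinatorial fact that a translate of a fixed finite set fits into an arbitrarily large axis-aligned square is elementary and follows directly from $n_k \to \infty$.
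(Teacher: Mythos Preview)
Your argument covers the ``homothetic copies'' half of the lemma but omits the other half: membership in $H$ requires membership in $E$, and you never verify that $x(y)\in E$, i.e.\ that all partial quotients of $x(y)$ are pairwise distinct. You note that the blocks $W_k$ are pairwise disjoint (which handles collisions \emph{within} the inserted material), and that $y\in S\subset E$ (which handles collisions among the $c_n(y)$), but you do not rule out the possibility that some $c_n(y)$ coincides with some element of some $W_k$. Without this, the conclusion $x(y)\in H$ does not follow.

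This is precisely the content the paper supplies: using the norm constraints $t^n\le\|c_n(y)\|_\infty<2t^n$ from the definition of $S$ together with $\min_{w\in W_k}\|w\|_\infty=2t^{n_k}$ and, via \eqref{H-ineq}, $\max_{w\in W_k}\|w\|_\infty<t^{n_k+1}$, one checks that $\|c_n(y)\|_\infty<2t^{n_k}$ for $n\le n_k$ and $\|c_n(y)\|_\infty\ge t^{n_k+1}$ for $n\ge n_k+1$, so $c_n(y)\notin W_k$ for every $n$ and $k$. Only after this does $x(y)\in E$ follow, and then your argument for the homothetic copies (which matches the paper's) finishes the proof. A minor side remark: since the offset $x$ in the definition of $W_k$ has nonnegative real and imaginary parts, one actually has $\|w\|_\infty\ge 2t^{n_k}$ for $w\in W_k$, not $2t^{n_k}-(\sqrt{\varepsilon n_k}-1)$; this simplifies your alphabet-admissibility check, though that check is not the crux here.
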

\begin{proof}

Let $y\in S$. For each $k\in\mathbb N$ we have
$\Vert c_{n_k}(y)\Vert_\infty <2 t^{n_k}\leq\min_{x\in W_k} \|x\|_\infty$,
and by \eqref{H-ineq},
$\Vert c_{n_k+1}(y)\Vert_\infty\geq t^{n_k+1}>2
t^{n_k}+\sqrt{\varepsilon n_k}>\max_{x\in W_k}\|x\|_\infty.$
%\textcolor{red}{\[\Vert c_{n_k}(y)\Vert_\infty <2t^{n_k}<\Vert(2t^{n_k}+1)+(2t^{n_k}+1)\sqrt{-1}\Vert_\infty\] and \[\begin{split}&\Vert2t^{n_k}+\varepsilon(n_k+1)+(2t^{n_k}+\varepsilon(n_k+1))\sqrt{-1}\Vert= 2t^{n_k}+\varepsilon(n_k+1)\\ &< (2+1)t^{n_k}\le t^{n_k+1}\le \Vert c_{n_k+1}(y)\Vert_\infty\end{split}\] } 
Hence we have $c_n(y)\notin W_k$ for all $n\geq1$. 
%$c_n(y)\notin \bigcup_{k=1}^{\infty}W_k$ for all $n.$
Since $k\in\mathbb N$ is arbitrary and the integer squares are pairwise disjoint as shown above, 
 $c_m(x(y))\neq c_n(x(y))$ holds for all $m$, $n\in\mathbb N$ with $m\neq n,$
 namely $x(y)\in E.$ 

For any finite set $A\subset\mathbb Z^2$,
there is $k_0\geq1$ such that if $k\geq k_0$ then $W_k$ contains a homothetic copy of $A$.
Hence, $\{c_n(x(y))\colon n\in\mathbb N\}$ contains infinitely many homothetic copies of any finite subset of $\mathbb Z^2$, and so $x(y)\in H$. Since $y\in S$ is arbitrary,  
the desired inclusion holds.
\end{proof}

Clearly the map $y\in S\mapsto x(y)\in \widetilde S$ is bijective.
Let $f\colon  \widetilde S\to S$ denote the inverse of this map, 
which eliminates all the digits in $\bigsqcup_{k=1}^\infty W_k$. 

\begin{pro}
\label{proholder}
There exists $C>0$ such that  %the map $f_{t,\{n_k\};\{W_k\}}$ is $\displaystyle{\frac{2-\theta}{(1+3\varepsilon)(2+\theta)}}$- H\"older.
\[|f(x_1)-f(x_2)|\leq C|x_1-x_2|^{\frac{1}{1+5\varepsilon}}\]
for all $x_1$, $x_2\in \widetilde S$.
\end{pro}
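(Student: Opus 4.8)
The plan is to establish the Hölder estimate for $f$ by comparing, for two points $x_1, x_2 \in \widetilde S$, the length of the longest common prefix of their (modified) Hurwitz expansions with the corresponding quantity for their images $f(x_1), f(x_2) \in S$, and then translating prefix length into metric distance via the cylinder diameter bounds of Lemma~\ref{diam-lem}. Concretely, suppose $x_1 \neq x_2$ and let $N$ be the largest index such that the first $N$ partial quotients of $x_1$ and $x_2$ agree; this common prefix consists of some genuine digits $c_1(y_i), \ldots$ interspersed with some complete inserted blocks $W_1, \ldots, W_{k}$ (where $k$ is the number of blocks fully contained in the first $N$ digits), followed by a point of disagreement. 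The corresponding common prefix of $f(x_1)$ and $f(x_2)$ in $S$ has length exactly $n$, where $n$ is the number of genuine digits among the first $N$; and by construction $N = n + \sum_{j=1}^{k} \# W_j$ if the disagreement occurs at a genuine digit, with the analysis when the disagreement occurs inside a block $W_{k+1}$ being slightly different but handled similarly (in that case $f(x_1) = f(x_2)$ up through digit $n_{k+1}$ and the two images actually disagree only later, so the estimate is even more favorable and can be absorbed).

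The key computation is then a two-sided estimate. \textbf{Upper bound on $|x_1 - x_2|$:} since $x_1, x_2$ lie in the common cylinder $\phi_{\omega_1 \cdots \omega_N}(\overline U)$ where $\omega_1, \ldots, \omega_N$ is the common prefix (all digits distinct, all in $\mathbb D_2$), Lemma~\ref{diam-lem} gives $|x_1 - x_2| \leq \operatorname{diam}(\phi_{\omega_1\cdots\omega_N}(\overline U)) \leq 2 \prod_{j=1}^{N} (|\omega_j| - 1)^{-2}$. I split this product into the genuine-digit factors, which contribute at least $\prod_{j=1}^{n}(2t^j - 1)^{-2} \geq \prod_{j=1}^n t^{-2(j-1)}$ type bounds using $t^n \le \|c_n(y)\|_\infty < 2t^n$, and the inserted-block factors, which contribute $\prod_{k' \le k}\prod_{w \in W_{k'}} (|w|-1)^{-2}$; here each $w \in W_{k'}$ has $|w| \geq t^{n_{k'}}$ so each block contributes a factor bounded above by something like $t^{-2 n_{k'} \cdot \# W_{k'}}$, and $\#W_{k'} \le \varepsilon n_{k'}(n_{k'}+2)$ roughly, giving total block contribution controlled by $t^{-c\sum_j \varepsilon n_j^2}$, which \eqref{H-ineq2} keeps comparable to $t^{-2\varepsilon n_k^2}$. \textbf{Lower bound on $|x_1 - x_2|$:} here I use that $f(x_1), f(x_2)$ have common prefix of length exactly $n$ and disagree at digit $n+1$; the disagreeing digits $c_{n+1}(f(x_1)) \ne c_{n+1}(f(x_2))$ lie in the finite alphabet $I^{(n+1)}$ (norms in $[t^{n+1}, 2t^{n+1})$), so their images lie in disjoint subcylinders, and a standard argument (e.g.\ using Lemma~\ref{conf-lem} and the open set condition, or directly the lower bound in Lemma~\ref{diam-lem} applied to the level-$(n+1)$ cylinders together with the bounded-distortion separation) yields $|f(x_1) - f(x_2)| \geq c \cdot \gamma \prod_{j=1}^{n+1}(\sqrt2 t^{j+1})^{-2}$, or more crudely $|f(x_1)-f(x_2)| \le \operatorname{diam}(\phi_{\tau_1\cdots\tau_n}(\overline U)) \le 2\prod_{j=1}^n t^{-2(j-1)}$ for the upper side and a matching lower bound from the next level. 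Inequality \eqref{equanew} is precisely engineered so that $\prod_{j=1}^{n+1}(\sqrt2 t^{j+1})^{-1} \cdot t^{-2\varepsilon n^2} \geq \big(\prod_{j=1}^n t^{-2(j-1)}\big)^{1+5\varepsilon}$, i.e.\ the upper bound for $|x_1-x_2|$ is at most the $(1+5\varepsilon)$-th power of a lower bound for $|f(x_1)-f(x_2)|$; rearranging gives $|f(x_1)-f(x_2)| \le C|x_1-x_2|^{1/(1+5\varepsilon)}$.

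The remaining bookkeeping is: (i) verify the case where the first disagreement of $x_1, x_2$ falls strictly inside an inserted block $W_{k+1}$ — then $f(x_1)$ and $f(x_2)$ may still agree well past digit $n_{k+1}$ (indeed they agree as long as the subsequent genuine digits do), but one checks that in this regime $|x_1 - x_2|$ is already so small (controlled by the diameter of a cylinder containing the whole partial block $W_{k+1}$) that the bound holds a fortiori; (ii) handle the finitely many pairs / small-distance regime to absorb everything into the constant $C$. I expect the main obstacle to be item (i) together with getting the block-contribution estimate clean: one must carefully account for the sizes $|w|$ of the elements $w \in W_k$ (they range over an integer square of side $\approx \sqrt{\varepsilon n_k}$ located near $2t^{n_k} + 2t^{n_k}\sqrt{-1}$, so $|w| \asymp t^{n_k}$ uniformly, which is what makes the estimate go through) and confirm that the cumulative distortion from inserting $\sum_j \#W_j$ extra digits is swallowed by the slack $5\varepsilon$ in the exponent, using \eqref{H-ineq}, \eqref{H-ineq2}, \eqref{equanew} in exactly the way they were set up. Once the prefix-length dictionary between $S$ and $\widetilde S$ is pinned down, the rest is a direct application of the diameter estimates.
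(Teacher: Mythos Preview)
Your overall architecture (compare common-prefix lengths, translate to distances via Lemma~\ref{diam-lem}) matches the paper's, but the \emph{directions} of your two key estimates are reversed, and this makes the argument collapse.

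To prove $|f(x_1)-f(x_2)|\le C|x_1-x_2|^{1/(1+5\varepsilon)}$, equivalently $|x_1-x_2|\ge c\,|f(x_1)-f(x_2)|^{1+5\varepsilon}$, you need a \emph{lower} bound on $|x_1-x_2|$ and an \emph{upper} bound on $|f(x_1)-f(x_2)|$. You set up the opposite pair: an upper bound on $|x_1-x_2|$ from the level-$N$ common cylinder, and a lower bound on $|f(x_1)-f(x_2)|$. Your concluding sentence ``the upper bound for $|x_1-x_2|$ is at most the $(1+5\varepsilon)$-th power of a lower bound for $|f(x_1)-f(x_2)|$'' would, if true, yield $|x_1-x_2|\le |f(x_1)-f(x_2)|^{1+5\varepsilon}$, i.e.\ the \emph{reverse} H\"older inequality. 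And in fact that inequality is not even true: in orders of magnitude the level-$N$ upper bound on $|x_1-x_2|$ is about $t^{-(1+2\varepsilon)n^2}$ (genuine factors $\approx t^{-n^2}$ times block factors $\approx t^{-2\varepsilon n^2}$), while any lower bound on $|y_1-y_2|$ is about $t^{-n^2}$, and $t^{-(1+2\varepsilon)n^2}\not\le t^{-(1+5\varepsilon)n^2}$.

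The paper obtains the lower bound on $|x_1-x_2|$ by descending one more level: writing $N=n+\sum_{j\le q}\#W_j$ for the common-prefix length and applying Lemma~\ref{conf-lem} to $\phi=\phi_{c_1(x_1)\cdots c_{N+1}(x_1)}$, one sees that $x_2\notin\phi(B(\phi^{-1}(x_1),\delta))$, hence $|x_1-x_2|\ge\delta|D\phi(\phi^{-1}(x_1))|/(3K)$. The derivative is then bounded below via Lemma~\ref{diam-lem}, with the block contribution handled by Lemma~\ref{lemesti1}; this is exactly where \eqref{equanew} is used, and it gives $|x_1-x_2|\gtrsim\big(\prod_{j=1}^n t^{-2(j-1)}\big)^{1+5\varepsilon}$. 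The upper bound on $|y_1-y_2|$ is the trivial one, $|y_1-y_2|\le\mathrm{diam}(\phi_{c_1(y_1)\cdots c_n(y_1)}(\overline U))\le 2\prod_{j=1}^n t^{-2(j-1)}$. Combining these two gives the claim.

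One further remark: your case (i), ``disagreement strictly inside an inserted block $W_{k+1}$'', cannot occur. The inserted blocks are the \emph{same} for every $y\in S$, so if $x_1,x_2\in\widetilde S$ agree up to the start of a block they automatically agree through the entire block; the first disagreement is always at a genuine-digit position. This is why the paper can simply set $n=s(y_1,y_2)$ and read off the corresponding disagreement position $N+1$ in $\widetilde S$ without any case analysis.
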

We finish the proof of Theorem~\ref{thmHI} assuming Proposition~\ref{proholder}.
Since $|c_n(x)|\to \infty$ as $n\to \infty$ for all $x\in E$, from \cite[Theorem~1.3]{Ger} it follows that $\dim_{\rm H} E\le 1$. By $H\subset E$ and Lemma \ref{hougan1}, we have $\dim_{\rm H} E\ge\dim_{\rm H} H\ge \dim_{\rm H}\widetilde S.$
By %Lemma~\ref{Holder-F} 
\cite[Proposition~3.3]{Fal14} and 
Proposition~\ref{proholder}, we have \[\dim_{\rm H}\widetilde S\ge  \frac{1}{1+5\varepsilon}\dim_{\rm H} S=\frac{1}{1+5\varepsilon}.\]
Since $\varepsilon>0$ is arbitrary we obtain $\dim_{\rm H} H\ge 1,$ namely  $\dim_{\rm H}E=\dim_{\rm H}H=1.$
\qed

\if0\begin{lem}
\label{lemesti1}
For all $q\ge 1,$
\[\prod_{j=1}^q\prod_{i\in W_j}\frac{1}{|i|+1}\geq \frac{1}{t^{2\varepsilon (n_q^2+ n_q)}}.\]
\end{lem}
\begin{proof}
For all $1\leq j\leq q,$ we have 
\[\prod_{i\in W_j}\frac{1}{|i|+1}\geq \prod_{i\in W_j}\frac{1}{\sqrt{2}(2t^{n_j}+\varepsilon \sqrt{n_j})+1}\ge  \frac{1}{(\sqrt{8}t^{n_j}+\sqrt{2}\varepsilon \sqrt{n_j}+1)^{\varepsilon n_j}}.\]
Since $\sqrt{8}t^{n_j}+\sqrt{2}\varepsilon \sqrt{n_j}+1\leq \sqrt{8}t^{n_j}+t^{n_j}\leq t^{(n_{j}+1)}$ by $4\leq  t,$ we have
\[\prod_{j=1}^q\prod_{i\in W_j}\frac{1}{|i|+1}\geq \prod_{j=1}^q \frac{1}{t^{{\varepsilon n_j}(n_{j}+1)}}.\]
Since $n_q\gg n_i$ for all $i=1,\ldots, q-1$, we have \[\sum_{j=1}^q \varepsilon n_j(n_j+1)\leq 2\varepsilon (n_q^2+ n_q),\] which implies the desired inequality.
%\[\prod_{j=1}^q\prod_{i\in W_j}\frac{1}{|i|}\geq \frac{1}{t^{2\varepsilon (n_q^2+ n_q)}}.\]
\end{proof}
\fi

\medskip

An idea behind a proof of
Proposition~\ref{proholder} is that 
$|c_1(y)|,\ldots,|c_{n_q}(y)|$ kills
the effect of modification by the integer squares $W_1,\ldots,W_q$ evaluated in the next lemma. %negligible in terms of the growth  
\begin{lem}
\label{lemesti1}
For all $q\ge 1$ we have
\[\prod_{k=1}^q\prod_{i\in W_k}\frac{1}{|i|+1}\geq \frac{1}
{t^{2\varepsilon n_q^2}}.\]
\end{lem}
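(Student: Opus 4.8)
The plan is to bound each factor $1/(|i|+1)$ from below by a fixed negative power of $t$ depending only on $n_k$, to count how many factors come from each $W_k$, and finally to sum the resulting exponents using the growth condition \eqref{H-ineq2}.

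First I would estimate the size of the Gaussian integers appearing in $W_k$. Every $i\in W_k$ has the form $2t^{n_k}(1+\sqrt{-1})+x$ with $\|x\|_\infty\le\sqrt{\varepsilon n_k}-1$, so $\|i\|_\infty<2t^{n_k}+\sqrt{\varepsilon n_k}$ and hence $|i|\le\sqrt2\,\|i\|_\infty<2\sqrt2\,t^{n_k}+\sqrt{2\varepsilon n_k}$. Since $2\sqrt2<t=3$ and, by \eqref{H-ineq}, $\sqrt{2\varepsilon n_k}<t^{n_k+1}-2t^{n_k}=t^{n_k}$, it follows that $|i|+1<(t+1)t^{n_k}+1\le t^{n_k+2}$ (here using $t=3$). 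Next, the elements of $W_k$ are parametrized by pairs $({\rm Re}(x),{\rm Im}(x))$ of nonnegative integers not exceeding $\sqrt{\varepsilon n_k}-1$, so $\#W_k\le(\sqrt{\varepsilon n_k})^{2}=\varepsilon n_k$.

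Combining these two estimates, for each $k$ we obtain $\prod_{i\in W_k}(|i|+1)^{-1}\ge t^{-(n_k+2)\#W_k}\ge t^{-\varepsilon n_k(n_k+2)}$, and therefore
\[\prod_{k=1}^q\prod_{i\in W_k}\frac1{|i|+1}\ge\frac1{t^{\sum_{k=1}^q\varepsilon n_k(n_k+2)}}\ge\frac1{t^{2\varepsilon n_q^2}},\]
where the last step is exactly \eqref{H-ineq2}. The computation is essentially bookkeeping; the only point requiring a moment's care is the bound $|i|+1\le t^{n_k+2}$, where one must invoke both $t=3$ and the gap condition \eqref{H-ineq} in order to absorb the additive term $\sqrt{2\varepsilon n_k}$ — and this is precisely why \eqref{H-ineq2} is stated with the factor $n_j(n_j+2)$ rather than $n_j(n_j+1)$. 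I foresee no genuine obstacle.
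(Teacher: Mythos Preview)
Your proof is correct and follows essentially the same route as the paper's: bound $|i|+1\le t^{n_k+2}$ via $t=3$ and \eqref{H-ineq}, use $\#W_k\le\varepsilon n_k$, and then apply \eqref{H-ineq2} to the resulting exponent sum. The only differences are cosmetic arithmetic rearrangements in how the bound $|i|+1\le t^{n_k+2}$ is reached.
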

\begin{proof}
For all $1\leq k\leq q$ we have $\#W_k\leq\varepsilon n_k$ and
\[\prod_{i\in W_k}\frac{1}{|i|+1}> \frac{1}{(\sqrt{2}(2 t^{n_k}+\sqrt{\varepsilon n_k})+1)^{\#W_k}}\ge  \frac{1}{(\sqrt{8}  t^{n_k}+\sqrt{2\varepsilon n_k}+1)^{\varepsilon n_k}}.\]
 By \eqref{H-ineq} and $t=3$ we have
 $\sqrt{8} t^{n_k}+\sqrt{2\varepsilon n_k}+1\leq \sqrt{8} t^{n_k}+t^{n_k}\leq t^{n_{k}+2}$. Plugging this inequality into the above denominator, multiplying the result over all $1\leq k\leq q$ and then using \eqref{H-ineq2} we obtain
\[\prod_{k=1}^q\prod_{i\in W_k}\frac{1}{|i|+1}\geq \prod_{k=1}^q \frac{1}{t^{{\varepsilon n_k}(n_{k}+2)}}\geq \frac{1}{t^{2\varepsilon n_q^2}}.\qedhere\]
\end{proof}

\begin{proof}[Proof of Proposition~\ref{proholder}]
In view of the isolation property (V), take $\delta>0$ such that $\overline U$ contains the $\delta$-neighborhood of $\Lambda$. 
For $y_1$, $y_2\in S$ with $y_1\neq y_2$,
let $s(y_1,y_2)$ denote the minimal  integer $n\geq0$ such that $c_{n+1}(y_1)\neq c_{n+1}(y_2).$
Notice that \begin{equation}\label{inf-ineq}\inf\{|f^{-1}(y_1)-f^{-1}(y_2)|\colon y_1,y_2\in S,\ y_1\neq y_2,\ s(y_1,y_2)<n_1\}>0.\end{equation}

Let $x_1, x_2\in \widetilde S$, $x_1\neq x_2$ and put
 $y_1=f(x_1)$, $y_2=f(x_2)$. 
Suppose $s(y_1,y_2)=n\geq n_k$ for some $k\geq1$.
For each $q\in\mathbb N$, set
 $m_q=\#W_q$.
 Let $q\geq k$ be the integer such that $n_q\le n< n_{q+1}.$ We have 
$c_j(x_1)=c_j(x_2)$ for $1\le j\le n+m_ {1}+\cdots+m_{q}$,
and  
$c_{n+m_1+\cdots+m_q+1}(x_1)=c_{n+1}(y_1)\neq c_{n+1}(y_2)=c_{n+m_1+\cdots+m_q+1}(x_2).$
Applying Lemma~\ref{conf-lem} with $\phi=\phi_{c_1(x_1)c_2(x_1)\cdots c_{n+m_1+\cdots+m_q+1}(x_1)}$ we obtain
\[\label{dis-bound}x_2\notin B\left(x_1,\frac{\delta|D\phi(\phi^{-1}(x_1))|}{3K} \right)\subset \phi(B(\phi^{-1}(x_1),\delta)).
\]
By Lemma~\ref{diam-lem}, Lemma~\ref{lemesti1}, $|c_j(y_1)| \leq \sqrt{8} t^j-1$ for $1\leq j\leq n+1$ and  finally \eqref{equanew}, for some $K\geq1$ we have
\[\label{holderine3}\begin{split}|x_1-x_2|&\ge \frac{\delta|D\phi(\phi^{-1}(x_1))|}{3K} \geq \frac{\delta}{3\sqrt{2}K^2 }{\rm diam}(\phi (\overline{U}))\\
&\geq C\frac{1}{\left((|c_1(x_1)|+1)\cdots (|c_{n+m_1+\cdots+m_q+1}(x_1)|+1)\right)^2}\\
&= C\prod_{j=1}^{n+1}\frac{1}{(|c_j(y_1)|+1)^2}\cdot\prod_{k=1}^q\prod_{i\in W_k}\frac{1}{(|i|+1)^2}\\
&\geq C \prod_{j=1}^{n+1}\frac{1}{(\sqrt{8} t^{j})^2}\cdot \frac{1}{t^{4\varepsilon n_q^2  }}\geq  C\prod_{j=1}^{n+1}\frac{1}{(\sqrt{2} t^{j+1})^2}\cdot \frac{1}{t^{4\varepsilon n^2 }}\\&\geq  C\left(\prod_{j=1}^{n}\frac{1}{t^{2(j-1)}}\right)^{2(1+5\varepsilon)},\end{split}\]
where $C=\delta\gamma/(3\sqrt{2}K^2).$
Meanwhile,
the second inequality in Lemma~\ref{diam-lem} and $|c_j(y_1)|\geq t^j$ for $1\leq j\leq n$ gives
\[\label{holder-ineq4}
  |y_1-y_2|\leq {\rm diam}(\phi_{c_1(y_1)\cdots c_n(y_1)}(\overline{U}))\leq
  2\prod_{j=1}^n\frac{1}{(|c_j(y_1)|-1)^2}
\leq2\prod_{j=1}^n\frac{1}{ t^{2(j-1)}}.\]
Combining these two estimates yields
\[|x_1-x_2|\ge C\frac{|y_1-y_2|^{1+5\varepsilon}}{2^{1+5\varepsilon}} \geq\frac{C}{64}|y_1-y_2|^{1+5\varepsilon}.\]
Together with \eqref{inf-ineq} we obtain
the desired inequality in Proposition~\ref{proholder}.
\end{proof}

\subsection{Final remark} %\textcolor{red}{Put when submitting to arXiv. Remove when submitting to a journal.}}
%\textcolor{red}{(remove) Bergelson and Leibman \cite{BL} proved a vast generalization of Furstenberg-Katznelson's theorem, known as polynomial Szemer\'edi's theorem. Let $\mathbb Q_0[X]$ denote the set of polynomials with rational coefficients taking integer values on the integers and vanishing at $X=0$. Slightly modifying the end of the proof of Theorem~\ref{thmHI} one can strengthen  Theorem~\ref{thmHI} as follows. \begin{thm} The set of $x\in E$ such that for any $h_1,\ldots,h_\ell\in\mathbb Q_0[X]$,  $v_1,\ldots,v_\ell\in \mathbb Z^2$, $\ell\geq2$     there exist infinitely many $(k,u)\in\mathbb N\times\mathbb Z^2$ such that    \[\{u+h_j(k)v_j\colon 1\leq j\leq \ell\}\subset \{c_n(x)\colon n\in\mathbb N\}\] is of Hausdorff dimension $1$.\end{thm} Taking $h_1=\cdots=h_\ell$ in the statement  recovers Theorem~\ref{thmHI}: in this case $\{u+h_j(k)v_j\colon 1\leq j\leq \ell\}$ is a homothetic copy of $\{v_j\colon 1\leq j\leq \ell\}$.}
%We leave two remarks on the scope of Theorem~\ref{thmHI}. First, the set $H$ in Theorem~\ref{thmHI} imposes no quantitative restriction on homothetic copies. Can one compute the Hausdorff dimension of subsets of $H$ with quantitative information on homothetic copies? Does this restriction on homothetic copies cause a dimension drop from $1$? Questions of these types on arithmetic progressions in the regular continued fraction partial quotients have been addressed in \cite{NTW}.

%Second, 
An interesting generalization of Theorem~\ref{thmHI} is to fix an infinite proper subset $S$ of $\mathbb Z^2$, and compute the Hausdorff dimension of the set of complex irrationals in $U$ whose partial quotients contain infinitely many homothetic copies of any finite subset of $\mathbb Z^2$ that are contained in $S$. The forthcoming paper \cite{NT4} treats this topic.

\subsection*{Acknowledgments} 
YN was supported by the JSPS KAKENHI 25K17282, Grant-in-Aid for Early-Career Scientists. HT was supported by the
JSPS KAKENHI 25K21999, Grant-in-Aid for Challenging Research (Exploratory).
%This research was supported by the JSPS KAKENHI 25K17282 and 25K21999.


\begin{thebibliography}{99}

% \bibitem{BL} V. Bergelson and A. Leibman. Polynomial extensions of Van der Waerden's and Szemer\'edi's theorems, J. Amer. Math. Soc. {\bf 9} 725--753. (1996)

% \bibitem{BGH25}Y. Bugeaud, G. Gonz\'alez Robert, and M. Hussain, Metrical properties of Hurwitz continued fractions,   Adv. Math. {\bf 468} (2025), Paper No. 110208.

\bibitem{CG93}L. Carleson and T. W. Gamelin,  Complex Dynamics. (English summary)Universitext: Tracts in Mathematics. Springer-Verlag, New York, 1993. 
%\bibitem{CMT}B. Cook, \'A. Magyar and T. Titichetrakun. 
%Brian Cook, \'Akos Magyar and Tatchai Titichetrakun.
%"A multidimensional Szemer\'edi theorem in the primes via combinatorics." Annals of Combinatorics 22.4 (2018): 711-768.


\bibitem{DN} S. G. Dani and Arnaldo Nogueira, Continued fractions for complex numbers and values of binary quadratic forms, Trans. Amer. Math. Soc. {\bf 366} (2014), no. 7, 3553--3583.


% \bibitem{EINN19} H. Ei, S. Ito, H. Nakada and R. Natsui, On the construction of the natural extension of the Hurwitz complex continued fraction map. Monatshefte f\"ur Math. (2019) {\bf 188} 37--86.
\bibitem{Fal14}Kenneth Falconer,
Fractal geometry. Mathematical foundations and applications. Third edition. John Wiley $\&$ Sons, Ltd., Chichester, 2014. 
%\bibitem{FZ15} J. Fox and Y. Zhao, A short proof of multidimensional Szemer\'edi theorem in the primes. Amer. J. Math. {\bf 137} (2015) 1139--1145.

 
%\bibitem{Fur77} H. Furstenberg, {\it Ergodic behavior of diagonal measures and a theorem of Szemer\'edi on arithmetic progressions}, J. Analyse Math. {\bf 31} (1977), 204--256.

%\bibitem{Fur81} H. Furstenberg, {\it Recurrence in ergodic theory and combinatorial number theory, } M. B. Porter Lectures. Princeton University Press, Princeton, NJ. (1981) 

\bibitem{FW01} De-Jun Feng and Jun Wu. The Hausdorff dimension of recurrent sets in symbolic spaces.
Nonlinearity {\bf 14} (2001), no. 1, 
81--85. 

 \bibitem{FK78} Hillel Furstenberg and Yitzhak Katznelson. An ergodic Szemer\'edi theorem for commuting transformations. J. Analyse Math. {\bf 34} (1978), 275--291 (1979). 
\bibitem{Ger}Gerardo Gonz\'alez Robert. Good's theorem for Hurwitz continued fractions, 
Int. J. Number Theory {\bf 16} (2020), no. 7, 1433--1447. 

%\bibitem{Gra80}R. Graham, B. Rothschild, J. Spencer, Ramsey Theory. Wiley, NewYork 1980.

%\bibitem{GT08} B. Green and T. Tao, {\it The primes contain arbitrarily long arithmetic progressions}, Ann. of Math. {\bf 167} (2008), 481--547.



%\bibitem{Hir73} K. E. Hirst. Continued fractions with sequences of partial quotients. Proc. Amer. Math. Soc. {\bf 38} (1973) 221--227.

\bibitem{H87} Adolf Hurwitz, \"Uber die Entwicklung complexer Gr\"ossen in Kettenbr\"uche. 
Acta Math. {\bf 11} (1887), no. 1--4, 187--200. 

%\bibitem{MauUrb}R. D. Mauldin and M. Urba\'nski, Graph directed Markov systems. Geometry and dynamics of limit sets. Cambridge Tracts in Mathematics, {\bf 148} Cambridge University Press, Cambridge, 2003. 

%\bibitem{MU96}R. D. Mauldin and M. Urba\'nski, Dimensions and measures in infinite iterated function systems, Proc. London Math. Soc. (3) {\bf 73} (1996), no. 1, 105-154.

%\bibitem{KST76} R. Kaneiwa, I. Shiokawa, and J. Tamura, Some properties of complex continued fractions, Comment. Math. Univ. St. Paul, {\bf 25} (1976/77), no. 2, 129--143.

%\bibitem{MauUrb}R. D. Mauldin and M. Urba\'nski, Dimensions and measures in infinite iterated function systems, Proc. London Math. Soc. (3) {\bf 73} (1996), no. 1, 105--154.
%\bibitem{N24} Y. Nakajima, Transversal family of non-autonomous conformal iterated function systems, J. Fractal Geom. {\bf 11} (2024), no. 1--2, 57--84.
%\bibitem{NT} Y. Nakajima and H. Takahasi, Density combinatorics theorems in fractal dimension theory of continued fractions, Adv. Math. {\bf 482} (2025) 110635
\bibitem{NT2} Yuto Nakajima and Hiroki Takahasi, A problem of Hirst for the Hurwitz continued fraction 
and the Hausdorff dimension of sets with slowly growing digits, Indagationes Mathematicae, published online

\bibitem{NT4} Yuto Nakajima and Hiroki Takahasi, Multidimensional fractal transference principle for conformal iterated function systems, in preparation

%\bibitem{NTW} Yuto Nakajima, Hiroki Takahasi and Baowei Wang, Hausdorff dimension of sets of numbers whose continued fractions contain arbitrarily long arithmetic progressions, arXiv.2601.12737

%\bibitem{R85} G. Ramharter, Eine Bemerkung \"uber gewisse Nullmengen von Kettenbr\"uchen, Ann. Univ. Sci. Budapest. E\"otv\"os Sect. Math. 28 (1985) 11--15 (1986).

\bibitem{RU}  Lasse Rempe-Gillen and  
Mariusz Urba\'nski,  Non-autonomous conformal iterated function systems and Moran-set constructions, 
Trans. Amer. Math. Soc. {\bf 368} (2016), no. 3, 1979--2017. 

%\bibitem{NTW} Y. Nakajima, H. Takahasi and B.-W. Wang, Hausdorff dimension of sets of numbers whose continued fractions contain arbitrarily long arithmetic progressions, arXiv.@@@


%\bibitem{PS72} G. P\'olya and G. Szeg\H{o}, {\it Problems and Theorems in Analysis} Vol I. Berlin: Springer-Verlag, 1972.



%\bibitem{R93} R. Rado, Note on combinatorial analysis. Proc. London Math. Soc. {\bf 48} (1993), 122--160.

%\bibitem{RU}  L. Rempe-Gillen and   M. Urba\'nski,  Non-autonomous conformal iterated function systems and Moran-set constructions, Trans. Amer. Math. Soc. {\bf 368}, %(3) 1979--2017 (2016)

\bibitem{S75} Endre Szemer\'edi, On sets of integers containing no $k$ elements in arithmetic progression, Acta Arith. {\bf 27} (1975), 199--245. 
 


%\bibitem{TZ15} T. Tao and T. Ziegler,  A multidimensional Szemer\'edi theorem for the primes via a correspondence principle. Israel J. Math. {\bf 207} (2015), 203--228.

\bibitem{TW}Xin Tong and Baowei Wang, How many points contain arithmetic progressions in their continued fraction expansion? Acta Arith. {\bf 139}  (2009), no. 4, 369--376.




\bibitem{ZC}Zhen-Liang Zhang and  Chun-Yun Cao, On points with positive density of the digit sequence in infinite iterated function systems, 
J. Aust. Math. Soc. {\bf 102} (2017), no. 3, 435--443. 
\end{thebibliography}
\end{document}